\newtheorem{theorem}{Theorem}
\newtheorem{lemma}{Lemma}
\newtheorem{remark}{Remark}
\newtheorem{example}{Example}
\newtheorem{definition}{Definition}
\newcommand{\beq}{\begin{equation}}
\newcommand{\eeq}{\end{equation}}
\newcommand{\beqnn}{\begin{equation*}}
\newcommand{\eeqnn}{\end{equation*}}
\newcommand{\beqy}{\begin{eqnarray}}
\newcommand{\eeqy}{\end{eqnarray}}
\newcommand{\beqynn}{\begin{eqnarray*}}
\newcommand{\eeqynn}{\end{eqnarray*}}
\newcommand{\bit}{\begin{itemize}}
\newcommand{\eit}{\end{itemize}}
\newcommand{\ben}{\begin{enumerate}}
\newcommand{\een}{\end{enumerate}}
\newcommand{\bex}{\begin{example}}
\newcommand{\eex}{\end{example}}
\newcommand{\balg}[1]{\begin{algorithm} \caption{#1}}
\newcommand{\ealg}{\end{algorithm}}
\newcommand{\balgc}{\begin{algorithmic}[1]}
\newcommand{\ealgc}{\end{algorithmic}}
\newcommand{\bary}{\begin{array}}
\newcommand{\eary}{\end{array}}
\newcommand{\bmx}{\begin{bmatrix}}
\newcommand{\emx}{\end{bmatrix}}
\newcommand{\bsmx}{\left[\begin{smallmatrix}}
\newcommand{\esmx}{\end{smallmatrix}\right]}
\newcommand{\bmxc}[1]{\left[\begin{array}{@{}#1@{}}}
\newcommand{\emxc}{\end{array}\right]}
\newcommand{\bcn}{\begin{center}}
\newcommand{\ecn}{\end{center}}
\newcommand{\R}{\boldsymbol{R}}
\begin{document}
\title{\Large\bf Recovery of  Signal and Image with Impulsive Noise via $\ell_1-\alpha \ell_2$ Minimization}

\author{Peng~Li, Huanmin~Ge and Wengu~Chen
 \thanks{P. ~Li is with  Graduate School, China Academy of Engineering Physics, Beijing 100088, China (E-mail: lipeng16@gscaep.ac.cn)}
 \thanks{ H. ~Ge is with Sports Engineering College, Beijing Sport University,
Beijing 100084, China (E-mail: gehuanmin@163.com).}
\thanks{W.~Chen is with Institute of Applied Physics and Computational Mathematics,
Beijing 100088, China (E-mail: chenwg@iapcm.ac.cn).  } }
\date{}

\maketitle

\begin{abstract}
In this paper, we consider the efficient and robust  reconstruction of  signals and images
 via $\ell_{1}-\alpha \ell_{2}~(0<\alpha\leq 1)$ minimization in impulsive noise case.
 To achieve this goal, we introduce  two new models: the $\ell_1-\alpha\ell_2$ minimization with $\ell_1$ constraint, which is called $\ell_1-\alpha\ell_2$-LAD, the $\ell_1-\alpha\ell_2$ minimization with Dantzig selector constraint, which is called $\ell_1-\alpha\ell_2$-DS.
 We first  show that sparse signals or nearly sparse signals can be exactly or stably recovered  via $\ell_{1}-\alpha\ell_{2}$ minimization under some conditions based on   the restricted $1$-isometry property ($\ell_1$-RIP). Second, for $\ell_1-\alpha\ell_2$-LAD model, we  introduce unconstrained $\ell_1-\alpha\ell_2$ minimization model  denoting  $\ell_1-\alpha\ell_2$-PLAD and propose $\ell_1-\alpha\ell_2$LA algorithm to solve   the $\ell_1-\alpha\ell_2$-PLAD.
 Last, numerical experiments 
 demonstrate that when the sensing matrix is ill-conditioned (i.e., the coherence of
  the matrix is larger than 0.99),  the $\ell_1-\alpha\ell_2$LA  method
  is better than  the  existing convex and non-convex compressed sensing solvers for the  recovery of sparse signals.  And for the magnetic resonance imaging (MRI) reconstruction with impulsive noise,  we show that
  the $\ell_1-\alpha\ell_2$LA  method has   better performance  than state-of-the-art methods via numerical experiments.
\end{abstract}

\begin{IEEEkeywords}
$\ell_1-\alpha\ell_2$ minimization, Impulsive noise, Sparse signal recovery, Image reconstruction, Linearized ADMM, LAD, Dantzig selector, Restricted $1$-isometry property.
\end{IEEEkeywords}
\IEEEpeerreviewmaketitle
\section{Introduction}\label{s1}

\hskip\parindent
\IEEEPARstart{C}{ompressed}
 sensing predicts that sparse signals can be reconstructed from what was previously believed to be incomplete information.  Since Cand\`{e}s, Romberg and Tao's seminal works \cite{CRT2006,CRT2006-1} and Donoho's ground-breaking work \cite{D2006},  this new field has triggered a large research in mathematics, engineering and medical image. In this contexts, it aims  to recover an unknown signal $\bm{x}\in\mathbb{R}^n$ from an underdetermined system of linear equations
\begin{align}\label{systemequationsnoise}
\bm{b}=\bm{A}\bm{x}+\bm{z},
\end{align}
where $\bm{b}\in\mathbb{R}^m$ are available measurements, the matrix $\bm{A}\in\mathbb{R}^{m\times n}~(m<n)$ models the linear measurement process and $\bm{z}\in\mathbb{R}^m$ is a measurement errors.

For the reconstruction of $\bm{x}$, the most intuitive approach is to find the sparsest signal in the  set of feasible solutions, which leads to the $\ell_0$ minimization method as follows
\begin{align}\label{VectorL0}
\min_{\bm{x}\in\mathbb{R}^n}\|\bm{x}\|_0~~\text{subject~ to}~~\bm{b}-\bm{A}\bm{x}\in\mathcal{B},
\end{align}
where $\|\bm{x}\|_0$ (it usually is called the $\ell_0$ norm of $\bm{x}$, but is not
a norm) denotes  the number of nonzero coordinates, and $\mathcal{B}$ is a bounded set determined by the error structure. However, such method is NP-hard and thus computationally infeasible in high dimensional background.

Cand\`{e}s and Tao \cite{CT2005} proposed a convex relaxation of the $\ell_0$ minimization method$-$the constrained $\ell_1$ minimization method:
\begin{align}\label{BPmodel}
\min_{\bm{x}\in\mathbb{R}^n}~\|\bm{x}\|_1~~\text{subject~ to}~~\bm{A}\bm{x}=\bm{b},
\end{align}
which is also called basis pursuit (BP) \cite{CDS1998}.  In noisy case, i.e., $\bm{z}\neq \bm{0}$,
the above method is generalized.
For example,  when $\|\bm{z}\|_2\leq\eta$ (the $\ell_2$ bounded noise),
\cite{CRT2006,DET2006} proposed the following method:
\begin{align}\label{QCBPmodel}
\min_{\bm{x}\in\mathbb{R}^n}~\|\bm{x}\|_1 ~~\text{subject~ to}~~\|\bm{b}-\bm{A}\bm{x}\|_{2}\leq\eta
\end{align}
for some constant $\eta>0$, which is called quadratically constrained basis pursuit (QCBP). Instead of solving (\ref{QCBPmodel}) directly, many authors also studied the following unconstrained Lasso method \cite{T1996}:
\begin{align}\label{VectorL1-Lasso}
\min_{\bm{x}\in\mathbb{R}^n}~\lambda\|\bm{x}\|_1+\frac{1}{2}\|\bm{A}\bm{x}-\bm{b}\|_2^2,
\end{align}
where $\lambda> 0$ is a parameter to balance the data fidelity term $\|\bm{A}\bm{x}-\bm{b}\|_2^2/2$ and the objective function $\|\bm{x}\|_1$. 
A large amount of literature on the $\ell_1$ minimization has emerged.

Some nonconvex relaxations of $\ell_0$ minimization as alternatives to convex relaxation $\ell_1$ minimization, which can give closer approximations to $\ell_0$,  promote sparsity better than $\ell_1$ minimization. The popular nonconvex relaxations  method include $\ell_p$  ($0<p<1$) minimization  and its variants \cite{C2007,CS2008,CWB2008,DG2009,DDF2010,S2011,S2012,XCXZ2012,LXY2013,WC2013,WLZ2015,ZL2017}
 and $\ell_{1}-\alpha\ell_{2}$ minimization in \cite{ELX2013,YEX2014,LYHX2015,YLHX2015,LZOX2015,YX2015,LYX2016,YSX2017,MLH2017,LP2017,LY2018,GWC2018}.
And in this paper, we only focus on $\ell_{1}-\alpha\ell_{2}$ minimization.

It is noted that \cite{ELX2013,YEX2014} focused on recovering nonnegative signal, i.e., $\bm{x}\geq \bm{0}$.
And in this paper, we focus on recovering signal $\bm{x}\in\mathbb{R}^n$.
To recover $\bm{x}\in\mathbb{R}^n$, \cite{LP2017,LY2018} proposed  $\ell_{1}-\alpha \ell_{2}$ ($0<\alpha\leq 1$) minimization:
\begin{align}\label{VectorL1-alphaL2}
\min_{\bm{x}\in\mathbb{R}^n}~\|\bm{x}\|_{1}-\alpha\|\bm{x}\|_{2} ~~\text{subject~ to}~ ~\bm{b}-\bm{A}\bm{x}\in\mathcal{B}.
\end{align}
 When $\alpha=1$,  \eqref{VectorL1-alphaL2} reduces the $\ell_{1-2}$ minimization in \cite{LYHX2015,YLHX2015}.
Specifically,   Lou, et. al. in \cite{LYHX2015} considered the noiseless case $\mathcal{B}=\{0\}$, i.e.,
\begin{align}\label{VectorL1-L2andExact}
\min_{\bm{x}\in\mathbb{R}^n}~\|\bm{x}\|_{1}-\|\bm{x}\|_{2} ~~\text{subject~ to}~ \bm{A}\bm{x}=\bm{b}
\end{align}
and gave the restricted isometry property (RIP) characterization of this problem.  And they also proposed a DCA method to solve the unconstrained problem  corresponding to \eqref{VectorL1-L2andExact}, which is called $\ell_{1-2}$-Lasso:
\begin{align}\label{VectorL1-L2-Lasso}
\min_{\bm{x}\in\mathbb{R}^n}~\lambda(\|\bm{x}\|_{1}-\|\bm{x}\|_{2})+\frac{1}{2}\|\bm{A}\bm{x}-\bm{b}\|_2^2.
\end{align}
Yin, et.al. \cite{YLHX2015} considered the noisy case, i.e.,
\begin{align}\label{VectorL1-L2-DN}
\min_{x\in\mathbb{R}^n}~\|\bm{x}\|_{1}-\|\bm{x}\|_{2} ~~\text{subject~ to}~ \|\bm{A}\bm{x}-\bm{b}\|_2\leq\eta_1,
\end{align}
where $\eta_1\geq0$ is the noise level. The numerical examples in \cite{LYHX2015,YLHX2015} demonstrate that the $\ell_{1}-\ell_{2}$ minimization consistently outperforms the $\ell_1$ minimization and iterative strategies for $\ell_p$ minimization \cite{LXY2013} when the
measurement matrix $\bm{A}$ is highly coherent. In addition,  $\ell_{1-2}$ has shown advantages in various applications such as image restoration \cite{LZOX2015}, phase retrieval \cite{YX2015}, and point source super-resolution \cite{LYX2016} and uncertainty quantification \cite{YSX2017} and matrix completion \cite{MLH2017}.

 In order to deal with heavy tail and heteroscedastic noise,
\cite{YZ2011,W2013} proposed the $\ell_1$ penalized least absolute deviation ($\ell_1$-PLAD),  insteading of Lasso, i.e.,
\begin{align}\label{VectorL1-LAD}
\min_{\bm{x}\in\mathbb{R}^n}~\|\bm{A}\bm{x}-\bm{b}\|_1+\lambda~\|\bm{x}\|_1.
\end{align}
Numerical examples in \cite{W2013} showed that the $\ell_1$-PLAD method \eqref{VectorL1-LAD}
 is better than the classical Lasso method \eqref{VectorL1-Lasso} for the heavy tail noise.

For  working with $\ell_p$ $(0<p\leq 1)$ norm,
Chartrand and Staneva \cite{CS2008} first proposed the restricted $p$ ($0<p\leq 1$)-isometry property ($\ell_p$-RIP), i.e.,
\begin{align}\label{Lq-RIP}
(1-\delta_s)\|\bm{x}\|_2^p\leq\|\bm{A}\bm{x}\|_p^p\leq(1+\delta_s)\|\bm{x}\|_2^p
\end{align}
for all $\bm{x}$ such that $\|\bm{x}\|_0\leq s$.
In \cite{CZ2015}, Cai and Zhang used the restricted 1-isometry property to characterize the exact and stable recovery of low-rank matrices.

Motivated by \cite{W2013,CZ2015,YLHX2015}, we will consider the $\ell_{1}-\alpha \ell_{2}$ minimization with $\ell_1$ constraint:
\begin{align}\label{VectorL1-alphaL2-LAD}
\min_{x\in\mathbb{R}^n}~\|\bm{x}\|_{1}-\alpha\|\bm{x}\|_{2} ~~\text{subject~ to}~ \|\bm{b}-\bm{A}\bm{x}\|_1\leq\eta_1
\end{align}
for some constant $\alpha\in(0,1]$ and $\eta_1\geq0$. The method is  called  $\ell_1-\alpha \ell_2$-LAD. In this paper,  we first give the $\ell_1$-RIP analysis for \eqref{VectorL1-alphaL2-LAD}.
Second, in order to solve  (\ref{VectorL1-alphaL2-LAD}),
 we present  the following unconstrained problem
corresponding to (\ref{VectorL1-alphaL2-LAD}):
\begin{align}\label{VectorL1-alphaL2-PLAD}
\min_{\bm{x}\in\mathbb{R}^n}~\lambda~(\|\bm{x}\|_1-\alpha\|\bm{x}\|_2)+\|\bm{A}\bm{x}-\bm{b}\|_1,
\end{align}
where $\lambda>0$ is a regularization parameter.  (\ref{VectorL1-alphaL2-PLAD}) is denoted $\ell_1-\alpha\ell_2$-PLAD. Next, we introduce a new algorithm   to compute proposed model (\ref{VectorL1-alphaL2-PLAD}). Last, numerical experiments are presented for the sparse signal and MRI image recovery problems.

The underdetermined problem (\ref{systemequationsnoise}) puts forward
both theoretical and computational challenges at the interface of statistics and
optimization (see, e.g., \cite{DET2006,MB2006,ZY2006}). In \cite{CT2007}, the so-called Dantzig selector was proposed
to perform variable selection and model fitting in the linear regression model. Its mathematical form is
\begin{align}\label{VectorL1-DS}
\min_{\bm{x}\in\mathbb{R}^n}~\|\bm{x}\|_{1}~~\text{subject~ to}~ \|\bm{A}^*(\bm{b}-\bm{A}\bm{x})\|_\infty\leq\eta_2
\end{align}
where $\eta_2\geq 0$ is a tuning or penalty parameter.
In \cite{CT2007}, performance of the Dantzig selector was analyzed theoretically by deriving
sharp nonasymptotic bounds on the error of estimated coefficients in the $\ell_2$ norm.

The Dantzig selector relates closely to Lasso (\ref{VectorL1-Lasso}). In some sense, Lasso estimator and Dantzig selector exhibit similar behavior. Essentially, the Dantzig selector model
(\ref{VectorL1-DS}) is a linear program while the Lasso model (\ref{VectorL1-Lasso}) is a quadratic program. They have the same objective function but with different constraints. For an extensive study on the relation between the
Dantzig selector and Lasso, we refer to a series of discussion papers which have been
published in The Annals of Statistics, e.g., \cite{B2007,CL2007,CT2007-1,EHT2007,FS2007,MRY2007,R2007}.

In this paper, we also consider $\ell_{1}-\alpha\ell_{2}$ minimization with Dantzig selector constraint
\begin{align}\label{VectorL1-alphaL2-DS}
\min_{\bm{x}\in\mathbb{R}^n}~\|\bm{x}\|_{1}-\alpha\|\bm{x}\|_{2} ~~\text{subject~ to}~ \|\bm{A}^*(\bm{b}-\bm{A}\bm{x})\|_\infty\leq\eta_2
\end{align}
for some constant $\eta_2\geq0$. We denote it as $\ell_{1}-\alpha\ell_{2}$-DS. Especially, when $\eta_1=0$ in (\ref{VectorL1-alphaL2-LAD}) or $\eta_2=0$ in (\ref{VectorL1-alphaL2-DS}), we consider
\begin{align}\label{VectorL1-alphaL2-Exact}
\min_{\bm{x}\in\mathbb{R}^n}~\|\bm{x}\|_{1}-\alpha\|\bm{x}\|_{2} ~~\text{subject~ to}~ \bm{A}\bm{x}=\bm{b}.
\end{align}

Besides establishing the $\ell_{1}$-RIP theory analysis, we also consider how to compute proposed model (\ref{VectorL1-alphaL2-PLAD}).
Combining ADMM \cite{BPCBJ2011} with DCA \cite{YLHX2015}, we propose an efficient algorithm $\ell_1-\alpha \ell_2$LA for $\ell_1-\alpha \ell_2$-PLAD problem (\ref{VectorL1-alphaL2-PLAD}).   Numerical experiments based on the $\ell_1-\alpha \ell_2$LA algorithm, for simulated signals and images show that the $\ell_1-\alpha \ell_2$LA algorithm is more robust than $\ell_1$-regularization based method and $\ell_p~(0<p<1)$-regularization based method. Our contributions of this paper can be stated as follows.

\begin{itemize}
\item[(1)]Two new models: $\ell_1-\alpha\ell_2$LAD and $\ell_1-\alpha\ell_2$-DS, are introduced, which are suitable for impulsive noise.
\item[(2)]In noiseless case, a uniform $\ell_1$-RIP condition for sparse signal recovery via  (\ref{VectorL1-alphaL2-Exact}) is established. In noisy case,
    the conditions based on $\ell_1$-RIP for the  recovery of  nearly sparse signals via $\ell_{1}-\alpha\ell_{2}$-LAD or $\ell_{1}-\alpha\ell_{2}$-DS are obtained, respectively.
\item[(3)]Combining ADMM \cite{BPCBJ2011} with DCA \cite{YLHX2015}, we propose $\ell_1-\alpha \ell_2$LA algorithm to compute $\ell_{1}-\alpha\ell_{2}$-PLAD model (\ref{VectorL1-alphaL2-PLAD}).
\item[(4)]We present performance analysis for sparse signal and compressible image recovery by numerical experiments based on the proposed $\ell_1-\alpha \ell_2$LA algorithm.
\end{itemize}

Throughout the article, we use the following basic notations. We denote $\mathbb{Z}_+$ by positive integer set. For any positive integer $n$, let $[[1,n]]$ denote the set $\{1,\ldots,n\}$. For $\bm{x}\in\mathbb{R}^n$,  denote $\bm{x}_{\max(s)}$ as the vector $\bm{x}$ with all but the largest $s$ entries in absolute value set to zero, and $\bm{x}_{-\max(s)}=\bm{x}-\bm{x}_{\max(s)}$. Let $\bm{x}_S$ be the vector equal to $\bm{x}$ on $S$ and to zero on $S^c$. Let $\|\bm{x}\|_{\alpha,1-2}$ denote $\|\bm{x}\|_1-\alpha\|\bm{x}\|_2$. And we denote $n\times n$ identity matrix by $\bm{I}_n$. And we denote the transpose of matrix $\bm{A}$ by $\bm{A}^*$. Use the phrase ``$s$-sparse vector" to refer to vectors of sparsity at most $s$. We use boldfaced letter denote matrix or vector.

\section{Exact Recovery via $\ell_{1}-\alpha\ell_{2}$ Minimization\label{s2}}\label{sec:exactrecovery}
\hskip\parindent

In this section, we will consider the exact recovery of $\bm{x}$ from \eqref{systemequationsnoise} via the method \eqref{VectorL1-alphaL2-Exact}. In order to characterize the exact recovery of $\bm{x}$,
 we first introduce the following definition of restricted $(\ell_2,\ell_p)$-isometry property.
\begin{definition}\label{LowerUpperlpRIP}
For $0<p\leq 1$, $s\in\mathbb{Z}_+$, we define the restricted $\ell_2/\ell_p$ isometry constant pair $(\delta_s^{lb},\delta_{s}^{ub})$ of order $s$ with respect to the measurement matrix $\bm{A}\in\mathbb{R}^{m\times n}$ as the smallest numbers $\delta_s^{lb}$ and $\delta_{s}^{ub}$ such that
\begin{align}\label{Vectorlq-RUB}
(1-\delta_s^{lb})\|\bm{x}\|_2^p\leq\|\bm{Ax}\|_p^p\leq(1+\delta_s^{ub})\|\bm{x}\|_2^p,
\end{align}
holds for all $s$-sparse signals $\bm{x}$.
We say that $\bm{A}$ satisfies the $(\ell_2,\ell_p)$-RIP  if $\delta_{s}^{lb}$ and $\delta_{s}^{ub}$ are small for reasonably large $s$.
\end{definition}

\begin{remark}\label{lqRIP-Remark}
When $\delta_s^{lb}=\delta_s^{ub}=\delta_s$,  Definition \ref{LowerUpperlpRIP} is the definition of
 the $\ell_p$-RIP (see \eqref{Lq-RIP}).
\end{remark}

\subsection{Auxiliary Lemmas \label{s2.1}}
\hskip\parindent

By the proof of \cite[Theorem 3.3]{YSX2017}, we have the following lemma, which is a modified cone
constraint inequality for $\ell_{1}-\alpha\ell_{2}$.
\begin{lemma}\label{ConeconstraintinequalityforL1-L2}
For any vectors $\bm{x}, \hat{\bm{x}}$, let $\bm{h}=\hat{\bm{x}}-\bm{x}$.
Assume that $\|\hat{\bm{x}}\|_{\alpha,1-2}\leq\|\bm{x}\|_{\alpha,1-2}$. Then
\begin{align}\label{e:h-maxsupperbound1}
&\|\bm{h}_{-\max(s)}\|_1\leq\|\bm{h}_{\max(s)}\|_1+2\|\bm{x}_{-\max(s)}\|_1+\alpha\|\bm{h}\|_2,\\
&\|\bm{h}_{-\max(s)}\|_1-\alpha\|\bm{h}_{-\max(s)}\|_2\leq\|\bm{h}_{\max(s)}\|_1+2\|\bm{x}_{-\max(s)}\|_1\nonumber\\
&\ \ \ \ \ \ \ \ \ \ \ \ \ \ \ \ \ \ \ \ \ \ \ \ \ \ \ \ \ \ \ \ \ \ \ \  +\alpha\|\bm{h}_{\max(s)}\|_2.
\label{e:h-maxsupperbound2}
\end{align}
Especially, when $\bm{x}$ is $s$-sparse, one has
\begin{align}\label{e:h-maxsupperboundnoiseless1}
\|\bm{h}_{-\max(s)}\|_1&\leq\|\bm{h}_{\max(s)}\|_1+\alpha\|\bm{h}\|_2,\\
\|\bm{h}_{-\max(s)}\|_1-\alpha\|\bm{h}_{-\max(s)}\|_2&\leq\|\bm{h}_{\max(s)}\|_1
+\alpha\|\bm{h}_{\max(s)}\|_2.
\label{e:h-maxsupperboundnoiseless2}
\end{align}
\end{lemma}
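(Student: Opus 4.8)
The plan is to prove \eqref{e:h-maxsupperbound1} first and then derive \eqref{e:h-maxsupperbound2} from it; the special case \eqref{e:h-maxsupperboundnoiseless1}--\eqref{e:h-maxsupperboundnoiseless2} is immediate by taking $\bm{x}_{-\max(s)}=\bm{0}$. I would start from the hypothesis $\|\hat{\bm{x}}\|_{\alpha,1-2}\le\|\bm{x}\|_{\alpha,1-2}$, i.e. $\|\bm{x}+\bm{h}\|_1-\alpha\|\bm{x}+\bm{h}\|_2\le\|\bm{x}\|_1-\alpha\|\bm{x}\|_2$. Rearranging gives $\|\bm{x}+\bm{h}\|_1-\|\bm{x}\|_1\le\alpha(\|\bm{x}+\bm{h}\|_2-\|\bm{x}\|_2)\le\alpha\|\bm{h}\|_2$, where the last step is the reverse triangle inequality for $\|\cdot\|_2$. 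So the essential reduction is to bound $\|\bm{h}_{-\max(s)}\|_1$ in terms of $\|\bm{h}_{\max(s)}\|_1$, $\|\bm{x}_{-\max(s)}\|_1$ and the quantity $\|\bm{x}+\bm{h}\|_1-\|\bm{x}\|_1$.

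For that reduction I would split the $\ell_1$ norm over the support-type set $T$ on which $\bm{x}_{\max(s)}$ is supported (the $s$ largest entries of $\bm{x}$ in magnitude) and its complement. On $T$, $\|(\bm{x}+\bm{h})_T\|_1\ge\|\bm{x}_T\|_1-\|\bm{h}_T\|_1$; on $T^c$, $\|(\bm{x}+\bm{h})_{T^c}\|_1\ge\|\bm{h}_{T^c}\|_1-\|\bm{x}_{T^c}\|_1$. Adding, $\|\bm{x}+\bm{h}\|_1\ge\|\bm{x}_T\|_1-\|\bm{h}_T\|_1+\|\bm{h}_{T^c}\|_1-\|\bm{x}_{T^c}\|_1$. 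Since $\|\bm{x}\|_1=\|\bm{x}_T\|_1+\|\bm{x}_{T^c}\|_1$ and $\|\bm{x}_{T^c}\|_1=\|\bm{x}_{-\max(s)}\|_1$, combining with $\|\bm{x}+\bm{h}\|_1-\|\bm{x}\|_1\le\alpha\|\bm{h}\|_2$ yields $\|\bm{h}_{T^c}\|_1\le\|\bm{h}_T\|_1+2\|\bm{x}_{-\max(s)}\|_1+\alpha\|\bm{h}\|_2$. It remains to replace $T$ by the index set of the $s$ largest entries of $\bm{h}$: writing $\|\bm{h}_{-\max(s)}\|_1$ for the latter's complement, one has $\|\bm{h}_{-\max(s)}\|_1\le\|\bm{h}_{T^c}\|_1$ and $\|\bm{h}_T\|_1\le\|\bm{h}_{\max(s)}\|_1$ by the extremal definition of $\bm{h}_{\max(s)}$, which gives \eqref{e:h-maxsupperbound1}.

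For \eqref{e:h-maxsupperbound2} I would subtract $\alpha\|\bm{h}_{-\max(s)}\|_2$ from both sides of \eqref{e:h-maxsupperbound1} and bound the right-hand side's $\alpha\|\bm{h}\|_2$ term. The needed inequality is $\alpha\|\bm{h}\|_2-\alpha\|\bm{h}_{-\max(s)}\|_2\le\alpha\|\bm{h}_{\max(s)}\|_2$, which follows from $\|\bm{h}\|_2\le\|\bm{h}_{\max(s)}\|_2+\|\bm{h}_{-\max(s)}\|_2$ (triangle inequality after writing $\bm{h}=\bm{h}_{\max(s)}+\bm{h}_{-\max(s)}$; one can even use $\|\bm{h}\|_2^2=\|\bm{h}_{\max(s)}\|_2^2+\|\bm{h}_{-\max(s)}\|_2^2$ and subadditivity of $\sqrt{\cdot}$). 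This gives \eqref{e:h-maxsupperbound2} directly. The $s$-sparse case sets $\bm{x}_{-\max(s)}=\bm{0}$, eliminating the $2\|\bm{x}_{-\max(s)}\|_1$ term in both inequalities.

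I do not expect a serious obstacle here: the argument is a standard null-space/cone-inequality bookkeeping exercise, and the only nonstandard ingredient is handling the extra $-\alpha\|\cdot\|_2$ term, which is controlled cheaply by the reverse triangle inequality $\|\bm{x}+\bm{h}\|_2-\|\bm{x}\|_2\le\|\bm{h}\|_2$ on the objective side and by subadditivity of the Euclidean norm on the estimate side. The one point requiring a little care is making sure the index set used to define $\bm{h}_{\max(s)}$ is chosen independently of (and possibly different from) $T$, and invoking the extremal property $\|\bm{h}_T\|_1\le\|\bm{h}_{\max(s)}\|_1$, $\|\bm{h}_{-\max(s)}\|_1\le\|\bm{h}_{T^c}\|_1$ correctly; this is routine but is where an inattentive write-up could slip.
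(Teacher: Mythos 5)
Your proof is correct, and it is essentially the standard cone-constraint derivation that the paper itself does not write out but imports by citing the proof of Theorem 3.3 in [YSX2017]: split $\|\bm{x}+\bm{h}\|_1$ over the support $T$ of $\bm{x}_{\max(s)}$, control the $\ell_2$ terms by the (reverse) triangle inequality, and pass from $T$ to the index set of the $s$ largest entries of $\bm{h}$ via the extremal property of $\bm{h}_{\max(s)}$. The one step you rightly flag as needing care (the $T$ versus $\mathrm{supp}(\bm{h}_{\max(s)})$ swap, and the disjoint-support bound $\|\bm{h}\|_2\le\|\bm{h}_{\max(s)}\|_2+\|\bm{h}_{-\max(s)}\|_2$ for the second inequality) is handled correctly.
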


The following lemma is the fundamental properties of the function $\|\bm{x}\|_1-\alpha\|\bm{x}\|_2$ with
$0\leq\alpha\leq 1$, which  is a  generalization of  \cite[Lemma 2.1 (a)]{YLHX2015}.
It will be frequently used in our proofs.
\begin{lemma}\label{LocalEstimateL1-L2}
For $0 \leq \alpha\leq 1$,
suppose $\bm{x}\in\mathbb{R}^n\backslash\{\bm{0}\}$, $T=\text{supp}(\bm{x})$ and $\|\bm{x}\|_0=s$, then
\begin{align}\label{e:l12alphas}
(s-\alpha\sqrt{s})\min_{j\in T}|x_j|\leq\|\bm{x}\|_1-\alpha\|\bm{x}\|_2\leq(\sqrt{s}-\alpha)\|\bm{x}\|_2.
\end{align}
\end{lemma}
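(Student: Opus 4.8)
The plan is to prove the two-sided bound \eqref{e:l12alphas} by exploiting the structure of $\|\bm{x}\|_1-\alpha\|\bm{x}\|_2$ on the support $T$ of size $s$. Throughout, write $t=\min_{j\in T}|x_j|>0$, so that $|x_j|\geq t$ for every $j\in T$.

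\textbf{Upper bound.} For the right-hand inequality I would use the elementary relation between the $\ell_1$ and $\ell_2$ norms of an $s$-sparse vector, namely $\|\bm{x}\|_1\leq\sqrt{s}\,\|\bm{x}\|_2$ (Cauchy--Schwarz applied to the $s$ nonzero entries). Subtracting $\alpha\|\bm{x}\|_2$ from both sides gives $\|\bm{x}\|_1-\alpha\|\bm{x}\|_2\leq\sqrt{s}\,\|\bm{x}\|_2-\alpha\|\bm{x}\|_2=(\sqrt{s}-\alpha)\|\bm{x}\|_2$, which is exactly the claimed upper bound. This step is immediate and poses no difficulty; it is precisely the estimate used in \cite{YLHX2015}.

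\textbf{Lower bound.} This is the part that needs the extra work. I would first handle the $\ell_1$ part trivially: $\|\bm{x}\|_1=\sum_{j\in T}|x_j|\geq s\,t$. For the $\ell_2$ part I need an \emph{upper} bound on $\|\bm{x}\|_2$ of the form $\|\bm{x}\|_2\leq\sqrt{s}\,\max_{j\in T}|x_j|$, but that is too crude since it involves the max, not the min. The right idea is to bound $\|\bm{x}\|_2$ by $\|\bm{x}\|_1/\sqrt{s}$ only when entries are equal; in general I instead use $\|\bm{x}\|_2\le\|\bm{x}\|_1$... which is also too weak. The correct route: observe $\alpha\|\bm{x}\|_2\le\alpha\sqrt{s}\cdot\frac{\|\bm{x}\|_1}{s}$ fails in general, so instead I bound $\|\bm{x}\|_2^2=\sum_{j\in T}|x_j|^2\le\big(\max_{j\in T}|x_j|\big)\sum_{j\in T}|x_j|$. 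Hmm — that still has the max. The cleanest correct argument is: since $\|\bm{x}\|_2\le\|\bm{x}\|_1/\sqrt{s}\cdot\sqrt{s}$... Let me state what actually works: write $\|\bm{x}\|_1-\alpha\|\bm{x}\|_2\ge\|\bm{x}\|_1-\alpha\sqrt{s}\cdot\frac{\|\bm{x}\|_2}{\sqrt s}$; using $\|\bm{x}\|_2\le\|\bm{x}\|_\infty^{1/2}\|\bm{x}\|_1^{1/2}$ is still awkward. The approach I will actually carry out is to use homogeneity to reduce to the case $t=\min_{j\in T}|x_j|=1$, so all $|x_j|\ge1$, and then show $\|\bm{x}\|_1-\alpha\|\bm{x}\|_2\ge s-\alpha\sqrt s$ by considering the function $f(\bm{x})=\|\bm{x}\|_1-\alpha\|\bm{x}\|_2$ on the closed set $\{\,|x_j|\ge1:j\in T\,\}$; since $f$ is coercive there (the $\ell_1$ term dominates) and by symmetry we may take $x_j\ge1$, a Lagrange/boundary analysis shows the minimum is attained at $x_j\equiv1$, giving value $s-\alpha\sqrt s$. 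Undoing the scaling multiplies through by $t$, yielding $(s-\alpha\sqrt s)\min_{j\in T}|x_j|\le\|\bm{x}\|_1-\alpha\|\bm{x}\|_2$.

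\textbf{Main obstacle.} The delicate point is the lower bound: one must verify that, subject to $x_j\ge1$ for all $j\in T$, the minimum of $\sum_j x_j-\alpha\sqrt{\sum_j x_j^2}$ is attained on the corner $x_j\equiv1$ rather than in the interior or at infinity. I expect to argue this by noting that along any ray increasing a single coordinate, $\frac{\partial}{\partial x_j}f=1-\alpha x_j/\|\bm{x}\|_2\ge1-\alpha>0$ once $\alpha<1$ (and a separate, easy direct check when $\alpha=1$, where $f\ge0=s-\sqrt s$ only if $s=1$, so the case $\alpha=1$, $s\ge2$ must be inspected to confirm $s-\sqrt s\le\|\bm{x}\|_1-\|\bm{x}\|_2$ still holds — it does, since $\|\bm{x}\|_1-\|\bm{x}\|_2\ge0$ would be insufficient, so here one genuinely needs $\|\bm{x}\|_1-\|\bm{x}\|_2\ge (\sqrt s-1)\min|x_j|\cdot\sqrt s/\sqrt s$, forcing the same monotonicity computation). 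Thus the monotonicity of $f$ in each coordinate on the region $\{x_j\ge t\}$ is the crux, and once established the bound follows by pushing every coordinate down to $t$. I would present this monotonicity computation carefully and then conclude both inequalities, which together give \eqref{e:l12alphas}.
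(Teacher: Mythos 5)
Your final argument is correct, but it takes a genuinely different route from the paper. The paper proves only the lower bound (the upper bound being the standard Cauchy--Schwarz estimate $\|\bm{x}\|_1\le\sqrt{s}\,\|\bm{x}\|_2$, exactly as you say), and it does so purely algebraically: ordering $|x_1|\ge\cdots\ge|x_s|$ and setting $t=\lfloor\sqrt{s}\rfloor$, it invokes the inequality $\|\bm{x}\|_2\le\sum_{i=1}^{t}|x_i|+(\sqrt{s}-t)|x_{t+1}|$ from \cite[(6.1)]{YLHX2015}, so that $\|\bm{x}\|_1-\alpha\|\bm{x}\|_2$ is bounded below by a linear combination of the $|x_i|$ with nonnegative coefficients (using $0\le\alpha\le 1$ and $0\le\sqrt{s}-t<1$); replacing every $|x_i|$ by $|x_s|=\min_{j\in T}|x_j|$ then makes the coefficients telescope to $s-\alpha\sqrt{s}$. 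You instead scale so that $\min_{j\in T}|x_j|=1$ and minimize $f(\bm{y})=\|\bm{y}\|_1-\alpha\|\bm{y}\|_2$ over the box $\{y_j\ge 1\}$ via coordinate-wise monotonicity, $\partial f/\partial y_j=1-\alpha y_j/\|\bm{y}\|_2\ge 1-\alpha\ge 0$, pushing every coordinate to the corner $(1,\dots,1)$. This is self-contained (no external inequality needed) and, once the monotonicity is stated, just as short; the paper's version avoids any calculus but leans on the imported bound on $\|\bm{x}\|_2$. Two remarks on your write-up: the exploratory false starts should be cut, and your worry about a separate check at $\alpha=1$ is unnecessary --- the same bound $\partial f/\partial y_j\ge 1-\alpha\ge 0$ holds there (since $y_j\le\|\bm{y}\|_2$ always), and weak monotonicity is all the corner argument needs; the case $\alpha=1$, $s=1$ gives $0\le 0$ and is consistent.
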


\begin{proof}
Without loss of generality,
let $|x_1|\geq |x_2|\cdots \geq |x_s|>|x_{s+1}|= \cdots =|x_n|=0$ and $t=\lfloor \sqrt{s}\rfloor$,
one has
\begin{align*}
\|\bm{x}\|_2\leq \sum_{i=1}^t|x_i|+(\sqrt{s}-t)|x_{t+1}|,
\end{align*}
which is \cite[(6.1)]{YLHX2015}.
Then
\begin{align}\label{e:l12alphalower}
&\|\bm{x}\|_1-\alpha\|\bm{x}\|_2
 \geq \|\bm{x}\|_1-\alpha(\sum_{i=1}^t|x_i|+(\sqrt{s}-t)|x_{t+1}|)\nonumber\\
&= [1-\alpha(\sqrt{s}-t)]|x_{t+1}|+\sum_{i=t+2}^s|x_i|+(1-\alpha)\sum_{i=1}^t|x_i|\nonumber\\
&\overset{(1)}{\geq}[1-\alpha(\sqrt{s}-t)]|x_{s}|+\sum_{i=t+2}^s|x_s|+(1-\alpha)\sum_{i=1}^t|x_s|\nonumber\\
&=(s-\alpha\sqrt{s})|x_{s}|\overset{(2)}{=} (s-\alpha\sqrt{s})\min_{i\in T}|x_i|,
\end{align}
where  (1) and (2) follow from $|x_1|\geq |x_2|\cdots \geq |x_s|$
and $0\leq\alpha\leq1 $.
\end{proof}

\begin{lemma}\label{LowerBound}
Assume that $\|\hat{\bm{x}}\|_{\alpha,1-2}\leq\|\bm{x}\|_{\alpha,1-2}$.
Let $\bm{h}=\hat{\bm{x}}-\bm{x}$, $T_0=\text{supp}(\bm{h}_{\max(s)})$,
$T_1$ be the index set of the $t\in\mathbb{Z}_+$
largest entries of $\bm{h}_{-\max(s)}$  and $T_{01}=T_0\cup T_1$,
the matrix $\bm{A}$ satisfies the  $(\ell_2,\ell_1)$-RIP condition
of $t+s$ order. Then
\begin{align}\label{e:Ahlowerbound}
\|\bm{Ah}\|_1
\geq&\rho_t\|\bm{h}_{T_{01}}\|_2
-\frac{2(1+\delta_{t}^{ub})\|\bm{x}_{-\max(s)}\|_1}{\sqrt{t}-\alpha},
\end{align}
where
\begin{align}\label{e:rho1}
\rho_{t}=1-\delta_{t+s}^{lb}-\frac{(1+\delta_t^{ub})}{a(s,t;\alpha)}
\end{align}
and $a(s,t;\alpha)=\frac{\sqrt{t}-\alpha}{\sqrt{s}+\alpha}$.
\end{lemma}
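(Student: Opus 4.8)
Proof proposal.

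The plan is to run the classical block-decomposition / shifting argument, but with the $\ell_1$-norm of $\bm{Ah}$ in place of the $\ell_2$-norm and with the modified cone inequality of Lemma~\ref{ConeconstraintinequalityforL1-L2} in place of the usual one. First I would sort the entries of $\bm{h}_{-\max(s)}$ in decreasing order of magnitude and split its support into consecutive blocks $T_1,T_2,\ldots$ each of size $t$ (so $T_1$ is exactly the set in the statement), writing $\bm{h}=\bm{h}_{T_{01}}+\sum_{j\ge 2}\bm{h}_{T_j}$. The triangle inequality then gives
\[
\|\bm{Ah}\|_1\ \ge\ \|\bm{A}\bm{h}_{T_{01}}\|_1-\sum_{j\ge 2}\|\bm{A}\bm{h}_{T_j}\|_1 .
\]
Since $|T_{01}|\le t+s$ and $|T_j|=t$, the $(\ell_2,\ell_1)$-RIP of Definition~\ref{LowerUpperlpRIP} bounds the first term below by $(1-\delta_{t+s}^{lb})\|\bm{h}_{T_{01}}\|_2$ and each subtracted term above by $(1+\delta_t^{ub})\|\bm{h}_{T_j}\|_2$. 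Thus the whole proof reduces to an upper bound on $w:=\sum_{j\ge 2}\|\bm{h}_{T_j}\|_2$ in terms of $\|\bm{h}_{T_{01}}\|_2$ and $\|\bm{x}_{-\max(s)}\|_1$.

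For that I would use the standard shifting estimate: every coordinate of $\bm{h}_{T_j}$ is dominated in magnitude by the average of $|\bm{h}|$ over $T_{j-1}$, so $\|\bm{h}_{T_j}\|_2\le\|\bm{h}_{T_{j-1}}\|_1/\sqrt{t}$, and summing over $j\ge 2$ yields $w\le\|\bm{h}_{-\max(s)}\|_1/\sqrt{t}$. Next I invoke the cone inequality \eqref{e:h-maxsupperbound1} of Lemma~\ref{ConeconstraintinequalityforL1-L2}, i.e. $\|\bm{h}_{-\max(s)}\|_1\le\|\bm{h}_{\max(s)}\|_1+2\|\bm{x}_{-\max(s)}\|_1+\alpha\|\bm{h}\|_2$, together with the elementary facts $\|\bm{h}_{\max(s)}\|_1\le\sqrt{s}\,\|\bm{h}_{\max(s)}\|_2\le\sqrt{s}\,\|\bm{h}_{T_{01}}\|_2$ and, by disjointness of the blocks and $\sqrt{a^2+b^2}\le a+b$, $\|\bm{h}\|_2\le\|\bm{h}_{T_{01}}\|_2+w$. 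Combining these gives $\sqrt{t}\,w\le(\sqrt{s}+\alpha)\|\bm{h}_{T_{01}}\|_2+\alpha w+2\|\bm{x}_{-\max(s)}\|_1$, hence (using $\sqrt{t}>\alpha$) $w\le\big[(\sqrt{s}+\alpha)\|\bm{h}_{T_{01}}\|_2+2\|\bm{x}_{-\max(s)}\|_1\big]/(\sqrt{t}-\alpha)$.

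Substituting this bound on $w$ into the triangle-inequality estimate yields
\[
\|\bm{Ah}\|_1\ \ge\ \Big(1-\delta_{t+s}^{lb}-\tfrac{(1+\delta_t^{ub})(\sqrt{s}+\alpha)}{\sqrt{t}-\alpha}\Big)\|\bm{h}_{T_{01}}\|_2-\tfrac{2(1+\delta_t^{ub})}{\sqrt{t}-\alpha}\|\bm{x}_{-\max(s)}\|_1 ,
\]
which is precisely \eqref{e:Ahlowerbound} once one recalls $a(s,t;\alpha)=(\sqrt{t}-\alpha)/(\sqrt{s}+\alpha)$, so that $(1+\delta_t^{ub})/a(s,t;\alpha)=(1+\delta_t^{ub})(\sqrt{s}+\alpha)/(\sqrt{t}-\alpha)$. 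I do not expect a serious obstacle here; the only delicate points are bookkeeping ones: choosing inequality \eqref{e:h-maxsupperbound1} rather than \eqref{e:h-maxsupperbound2} (the former routes the $\sqrt{t}-\alpha$ denominator cleanly through $\|\bm{h}\|_2$, whereas naively using \eqref{e:h-maxsupperbound2} loses an extra factor of $\alpha$), consistently dominating $\|\bm{h}_{\max(s)}\|_2$ by $\|\bm{h}_{T_{01}}\|_2$ and the tail mass by $w$, and the tacit hypothesis $\sqrt{t}>\alpha$ that makes the final division legitimate.
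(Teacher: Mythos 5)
Your argument is correct and reaches exactly the paper's intermediate bound \eqref{e:htjupperbound}, hence the stated conclusion \eqref{e:Ahlowerbound}; the block decomposition, the triangle inequality, and the use of the $(\ell_2,\ell_1)$-RIP on $T_{01}$ and on each $T_j$ are identical to the paper's. Where you diverge is in how the tail sum $w=\sum_{j\geq 2}\|\bm{h}_{T_j}\|_2$ is controlled. The paper uses the refined per-block estimate of Lemma~\ref{LocalEstimateL1-L2}, namely $\|\bm{h}_{T_j}\|_2\leq(\|\bm{h}_{T_{j-1}}\|_1-\alpha\|\bm{h}_{T_{j-1}}\|_2)/(\sqrt{t}-\alpha)$, telescopes over the blocks, and then applies the cone inequality \eqref{e:h-maxsupperbound2}, which already has the difference form $\|\bm{h}_{-\max(s)}\|_1-\alpha\|\bm{h}_{-\max(s)}\|_2$ on its left side. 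You instead use the plain shift estimate $\|\bm{h}_{T_j}\|_2\leq\|\bm{h}_{T_{j-1}}\|_1/\sqrt{t}$, invoke the other cone inequality \eqref{e:h-maxsupperbound1}, and absorb the resulting $\alpha\|\bm{h}\|_2$ term by the self-bounding step $\|\bm{h}\|_2\leq\|\bm{h}_{T_{01}}\|_2+w$, solving the linear inequality in $w$; the factor $\sqrt{t}-\alpha$ then appears from that algebra rather than from Lemma~\ref{LocalEstimateL1-L2}. Your route is more elementary (it bypasses Lemma~\ref{LocalEstimateL1-L2} entirely) and, as a side benefit, avoids a small bookkeeping wrinkle in the paper's step (1), where bounding $\sum_{j=1}^{J-1}\|\bm{h}_{T_j}\|_2$ from below by $\|\bm{h}_{T_0^c}\|_2$ silently ignores the last block $T_J$; the paper's version, in exchange, keeps the per-block cancellation explicit. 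Your closing remarks are also apt: the division requires $\sqrt{t}>\alpha$ (which fails only in the degenerate case $t=1$, $\alpha=1$, where the lemma's own right-hand side is already meaningless), and choosing \eqref{e:h-maxsupperbound1} over \eqref{e:h-maxsupperbound2} is indeed what makes your bookkeeping close cleanly.
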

\begin{proof}
First, we partition $T_0^c=[[1,n]]\backslash T_0$  as
$$
T_0^c=\cup_{j=1}^J T_j,
$$
where $T_1$ is the index set of the $t$ largest entries of $\bm{h}_{-\max(s)}$, $T_2$ is the index set of the next $t$ largest entries of $\bm{h}_{-\max(s)}$, and so on.
Notice that the last index set $T_J$ may contain less $t$ elements.

By $T_{01}=T_0\cup T_1$, one has
\begin{align}\label{e2.3}
\|\bm{Ah}\|_1=&\bigg\|\bm{A}\bm{h}_{T_{01}}+\sum_{j\geq 2}^J\bm{A}\bm{h}_{T_{j}}\bigg\|_1
\geq\|\bm{A}\bm{h}_{T_{01}}\|_1-\sum_{j\geq 2}^J\|\bm{A}\bm{h}_{T_{j}}\|_1\nonumber\\
\geq& (1-\delta_{t+s}^{lb})\|\bm{h}_{T_{01}}\|_{2}-(1+\delta_{t}^{ub})\sum_{j\geq 2}^J\|\bm{h}_{T_j}\|_{2},
\end{align}
where the last inequality is due to  $T_{01}=T_0\cup T_1$,  $|T_{0}|\leq s$,  $|T_{i}|\leq t$ for $i=1, 2, \cdots, J $,
  and $\bm{A}$ satisfies the  $(\ell_2,\ell_1)$-RIP condition of $t+s$ order.
Thus, to show \eqref{e:Ahlowerbound}, it  suffices to show that
\begin{align}\label{e:htjupperbound}
\sum_{j\geq 2}^J\|\bm{h}_{T_j}\|_2 \leq \frac{\|\bm{h}_{T_{01}}\|_2}{a(s,t;\alpha)}+\frac{2\|\bm{x}_{-\max(s)}\|_1}{\sqrt{t}-\alpha}.
\end{align}

Next, we move to prove \eqref{e:htjupperbound}.
For $2\leq  j \leq J$, it follows from the definition of  $T_{j-1}$ that
\begin{align}\label{e:hiupperbound}
|h_i|\leq\min_{k\in T_{j-1}}|h_k|\leq\frac{\|\bm{h}_{T_{j-1}}\|_1-\alpha\|\bm{h}_{T_{j-1}}\|_2}{t-\alpha\sqrt{t}},
\end{align}
for any $i\in T_{j}$,
where  the last inequality is from  Lemma \ref{LocalEstimateL1-L2} and $|T_{j-1}|=t$ with  $2\leq  j \leq J$.
Then, for $2\leq  j \leq J$, one has
\begin{align*}
\|\bm{h}_{T_j}\|_2=\Big(\sum_{i\in T_j}|h_i|^2\Big)^{1/2}\leq \frac{\|\bm{h}_{T_{j-1}}\|_1-\alpha\|\bm{h}_{T_{j-1}}\|_2}{\sqrt{t}-\alpha},
\end{align*}
where the last inequality is from \eqref{e:hiupperbound} and $|T_{j}|\leq t$ with  $2\leq  j \leq J$.
Therefore, by the above inequality,
\begin{align*}
\sum_{j\geq 2}\|\bm{h}_{T_j}\|_2
\leq&\frac{\sum_{j\geq2}^J(\|\bm{h}_{T_{j-1}}\|_1-\alpha\|\bm{h}_{T_{j-1}}\|_2)}{\sqrt{t}-\alpha}\\
\overset{(1)}{\leq}&\frac{\|\bm{h}_{T_{0}^c}\|_1-\alpha\|\bm{h}_{T_0^c}\|_2}{\sqrt{t}-\alpha}\\
\overset{(2)}{\leq}&\frac{\|\bm{h}_{T_{0}}\|_1+2\|x_{-\max(s)}\|_1+\alpha\|\bm{h}_{T_0}\|_2}{\sqrt{t}-\alpha}\\
\overset{(3)}{\leq}&\frac{\|\bm{h}_{T_{01}}\|_2}{a(s,t,;\alpha)}+\frac{2\|x_{-\max(s)}\|_1}{\sqrt{t}-\alpha}
\end{align*}
where
(1) is due to $T_0^c=\cup_{j=1}^J T_j$ and the fact that
$\sum_{j\geq2}^J \|\bm{h}_{T_{j-1}}\|_2\geq \Big(\sum_{j\geq2}^J\|h_{T_{j-1}}\|_2^2\Big)^{1/2}$,
(2) follows from \eqref{e:h-maxsupperbound2} and $T_0=\mathrm{supp}(\bm{h}_{\max(s)})$
and (3) is from $\|\bm{h}_{T_{0}}\|_1\leq \sqrt{|T_{0}|}\|\bm{h}_{T_{0}}\|_2$, $|T_0|\leq s$,
$T_{01}=T_0\cup T_1$
and  $a(s,t;k)=\frac{\sqrt{t}-\alpha}{\sqrt{s}+\alpha}$.
The proof is complete.


\end{proof}

\subsection{Exact Recovery under $(\ell_2,\ell_1)$-RIP\label{s2.2}}
\hskip\parindent

Now, we present our result for  the exact recovery
of $\bm{x}$ from \eqref{systemequationsnoise} with $\bm{z}=\bm{0}$ via (\ref{VectorL1-alphaL2-Exact}).

\begin{theorem}\label{ExactRecoveryviaVectorL1-alphaL2-Exact}
 For $0 <\alpha\leq 1$,
let $s\in[[1,n]]$, $k>0$ such that $ks\in\mathbb{Z}_+$ and  $a(s,ks;\alpha)=\frac{\sqrt{ks}-\alpha}{\sqrt{s}+\alpha}>1$. Let  $\bm{b}=\bm{Ax}$ and $\bm{x}$ be $s$-sparse.
If the measurement matrix $\bm{A}$ satisfies $(\ell_2,\ell_1)$-RIP with
\begin{align}\label{e2.0}
\delta_{ks}^{ub}+a(s,ks;\alpha)\delta_{(k+1)s}^{lb}<a(s,ks;\alpha)-1,
\end{align}
then \eqref{VectorL1-alphaL2-Exact} has unique $s$-sparse solution.
\end{theorem}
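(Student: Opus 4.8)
The plan is to show that the $s$-sparse ground truth $\bm{x}$ is strictly better than every other feasible point, so that $\bm{x}$ (which is itself feasible) is the unique minimizer of \eqref{VectorL1-alphaL2-Exact} and is $s$-sparse. Concretely, suppose $\hat{\bm{x}}$ is any feasible vector, i.e.\ $\bm{A}\hat{\bm{x}}=\bm{b}$, satisfying $\|\hat{\bm{x}}\|_{\alpha,1-2}\leq\|\bm{x}\|_{\alpha,1-2}$, and put $\bm{h}=\hat{\bm{x}}-\bm{x}$. Since $\bm{b}=\bm{A}\bm{x}$ we have $\bm{A}\bm{h}=\bm{0}$, and the whole argument reduces to showing $\bm{h}=\bm{0}$.

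The main tool is Lemma~\ref{LowerBound}, which I would apply with $t=ks$, so that $t+s=(k+1)s$ matches the order of the $(\ell_2,\ell_1)$-RIP assumed in the hypotheses and $a(s,t;\alpha)=a(s,ks;\alpha)>1$ as required. Because $\bm{x}$ is $s$-sparse, $\bm{x}_{-\max(s)}=\bm{0}$, so the residual term in \eqref{e:Ahlowerbound} drops out and the lemma yields
\begin{align*}
0=\|\bm{A}\bm{h}\|_1\;\geq\;\rho_{ks}\,\|\bm{h}_{T_{01}}\|_2,\qquad
\rho_{ks}=1-\delta_{(k+1)s}^{lb}-\frac{1+\delta_{ks}^{ub}}{a(s,ks;\alpha)}.
\end{align*}
The only routine computation is to check that, since $a(s,ks;\alpha)>1>0$, the inequality $\rho_{ks}>0$ is equivalent to $a(s,ks;\alpha)\bigl(1-\delta_{(k+1)s}^{lb}\bigr)>1+\delta_{ks}^{ub}$, which is exactly the assumed condition \eqref{e2.0}. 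Hence $\rho_{ks}>0$, and therefore $\|\bm{h}_{T_{01}}\|_2=0$, i.e.\ $\bm{h}_{T_{01}}=\bm{0}$.

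To finish, I would argue that $\bm{h}_{T_{01}}=\bm{0}$ forces $\bm{h}=\bm{0}$. Indeed $T_0=\text{supp}(\bm{h}_{\max(s)})\subseteq T_{01}$ indexes the $s$ largest-magnitude coordinates of $\bm{h}$; since all of these are now zero, every remaining coordinate has magnitude at most $0$ as well, so $\bm{h}=\bm{0}$ and $\hat{\bm{x}}=\bm{x}$. This simultaneously establishes uniqueness and $s$-sparsity of the minimizer. I do not expect a genuine obstacle here, since all the analytic content is already packaged in Lemmas~\ref{ConeconstraintinequalityforL1-L2}--\ref{LowerBound}; the two points demanding care are (i) confirming that \eqref{e2.0} is literally the positivity of $\rho_{ks}$ --- using $a(s,ks;\alpha)>1$ so that no inequality reverses --- and (ii) the concluding implication $\bm{h}_{T_{01}}=\bm{0}\Rightarrow\bm{h}=\bm{0}$, which uses that $T_0$ collects the largest entries of $\bm{h}$ rather than an arbitrary $s$-subset.
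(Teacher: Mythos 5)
Your proposal is correct and follows essentially the same route as the paper's own proof: apply Lemma~\ref{LowerBound} with $t=ks$ to the null-space vector $\bm{h}=\hat{\bm{x}}-\bm{x}$, note that $\bm{x}_{-\max(s)}=\bm{0}$ kills the residual term, observe that \eqref{e2.0} is exactly $\rho_{ks}>0$, and conclude $\bm{h}_{T_{01}}=\bm{0}$ hence $\bm{h}=\bm{0}$. Your explicit justification of the final step ($T_0$ collects the largest entries of $\bm{h}$, so $\bm{h}_{T_{01}}=\bm{0}$ forces $\bm{h}=\bm{0}$) is a point the paper passes over tersely, and is correctly handled.
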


\begin{remark}\label{Simplercondition-ExactRecoveryviaVectorL1-alphaL2-Exact}
If 
$k\geq 4\alpha^2/(\sqrt{s}-\alpha)^2$, then the sufficient condition \eqref{e2.0} can be replaced by
\begin{align}\label{e2.9g}
\delta_{ks}^{ub}+\big(\sqrt{k}/2\big)\delta_{(k+1)s}^{lb}<\sqrt{k}/2-1.
\end{align}
In fact, by $k\geq 4\alpha^2/(\sqrt{s}-\alpha)^2$,
then 
$a(s,ks;\alpha)=\frac{\sqrt{ks}-\alpha}{\sqrt{s}+\alpha}\geq\sqrt{k}/2>1$,
Furthermore, by \eqref{e2.9g},
 \begin{align*}
1-\delta_{(k+1)s}^{lb}-\frac{1+\delta_{ks}^{ub}}{a(s,ks;\alpha)}
\geq  1-\delta_{(k+1)s}^{lb}-\frac{2(1+\delta_{ks}^{ub})}{\sqrt{k}}>0,
\end{align*}
which implies \eqref{e2.0}.
\end{remark}

\section{Stable Recovery via $\ell_{1}-\alpha\ell_{2}$ Minimization\label{s3}}
\hskip\parindent

In the bounded noisy case, we will consider the stable recovery of the signal $\bm{x}$ from \eqref{systemequationsnoise}
via models (\ref{VectorL1-alphaL2-LAD}) and (\ref{VectorL1-alphaL2-DS}).

\subsection{Stable Recovery Under $(\ell_2, \ell_1)$-RIP\label{s3.1}}
\hskip\parindent
In the $\ell_1$ bounded noisy case, we obtain the sufficient conditions
for the stable  recovery of the signal $\bm{x}$ from \eqref{systemequationsnoise} via the $\ell_{1}-\alpha \ell_{2}$  minimization model (\ref{VectorL1-alphaL2-LAD}) in the following theorem.
\begin{theorem}\label{StableRecoveryviaVectorL1-alphaL2-LAD}
Consider  $\bm{b}=\bm{Ax}+\bm{z}$ with $\|\bm{z}\|_1\leq \eta_1$.
For some $s\in[[1,n]]$ and $0< \alpha\leq 1$, let  $k>0$ such that $ks\in\mathbb{Z}_+$ and  $a(s,ks; \alpha)=\frac{\sqrt{ks}-\alpha}{\sqrt{s}+\alpha}>1$.
Let  $\hat{\bm{x}}^{\ell_1}$ be the minimizer of  (\ref{VectorL1-alphaL2-LAD}).
If the measurement matrix $\bm{A}$ satisfies (\ref{e2.0}), then
\begin{align}\label{e:stablel1upper}
&\|\hat{\bm{x}}^{\ell_1}-\bm{x}\|_2
\leq\frac{2(2\sqrt{k}+1)\sqrt{s}}{(2\sqrt{2}\sqrt{s}-\alpha)\rho_{ks}}\eta_1
\nonumber\\
&\ \ +\frac{\sqrt{s}}{2\sqrt{ks}-\alpha}\Big(\frac{(2\sqrt{k}+1)(1+\delta_{ks}^{ub})\sqrt{s}}
{\rho_{ks}(\sqrt{ks}-\alpha)}+1\Big)
\frac{2\|\bm{x}_{-\max(s)}\|_1}{\sqrt{s}},
\end{align}
where $\rho_{ks}=1-\delta_{(k+1)s}^{lb}-\frac{(1+\delta_{ks}^{ub})}{a(s,ks;\alpha)}$.
\end{theorem}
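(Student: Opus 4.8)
The plan is to mimic the standard two-part structure used for $\ell_1-\alpha\ell_2$-type stable recovery proofs: first produce an upper bound on $\|\bm{Ah}\|_1$ in terms of $\eta_1$, then combine it with the lower bound from Lemma~\ref{LowerBound} to control $\|\bm{h}_{T_{01}}\|_2$, and finally pass from $\|\bm{h}_{T_{01}}\|_2$ to the full $\|\bm{h}\|_2=\|\hat{\bm{x}}^{\ell_1}-\bm{x}\|_2$. Write $\bm{h}=\hat{\bm{x}}^{\ell_1}-\bm{x}$. Since $\bm{x}$ is feasible for (\ref{VectorL1-alphaL2-LAD}) (because $\|\bm{b}-\bm{Ax}\|_1=\|\bm{z}\|_1\le\eta_1$) and $\hat{\bm{x}}^{\ell_1}$ is the minimizer, we get $\|\hat{\bm{x}}^{\ell_1}\|_{\alpha,1-2}\le\|\bm{x}\|_{\alpha,1-2}$, which is exactly the hypothesis needed to invoke Lemma~\ref{ConeconstraintinequalityforL1-L2} and Lemma~\ref{LowerBound} with $t=ks$. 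For the upper bound on $\|\bm{Ah}\|_1$, I would use the triangle inequality: $\|\bm{Ah}\|_1\le\|\bm{A}\hat{\bm{x}}^{\ell_1}-\bm{b}\|_1+\|\bm{Ax}-\bm{b}\|_1\le\eta_1+\eta_1=2\eta_1$.

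Next I would apply Lemma~\ref{LowerBound} with $t=ks$, which gives
\begin{align*}
2\eta_1\ \ge\ \|\bm{Ah}\|_1\ \ge\ \rho_{ks}\|\bm{h}_{T_{01}}\|_2-\frac{2(1+\delta_{ks}^{ub})\|\bm{x}_{-\max(s)}\|_1}{\sqrt{ks}-\alpha},
\end{align*}
so that, provided $\rho_{ks}>0$ (which follows from condition (\ref{e2.0}) exactly as spelled out in Remark~\ref{Simplercondition-ExactRecoveryviaVectorL1-alphaL2-Exact}, since (\ref{e2.0}) is equivalent to $\rho_{ks}>0$), we obtain
\begin{align*}
\|\bm{h}_{T_{01}}\|_2\ \le\ \frac{2\eta_1}{\rho_{ks}}+\frac{2(1+\delta_{ks}^{ub})\|\bm{x}_{-\max(s)}\|_1}{\rho_{ks}(\sqrt{ks}-\alpha)}.
\end{align*}
This is the core estimate; everything after it is bookkeeping to reach the stated form.

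The remaining step is to bound $\|\bm{h}\|_2\le\|\bm{h}_{T_{01}}\|_2+\|\bm{h}_{T_{01}^c}\|_2$ and control the tail $\|\bm{h}_{T_{01}^c}\|_2$. Following the chain of inequalities in the proof of Lemma~\ref{LowerBound} (the estimate $\sum_{j\ge2}\|\bm{h}_{T_j}\|_2\le\|\bm{h}_{T_{01}}\|_2/a(s,ks;\alpha)+2\|\bm{x}_{-\max(s)}\|_1/(\sqrt{ks}-\alpha)$, together with $\|\bm{h}_{T_{01}^c}\|_2\le\sum_{j\ge2}\|\bm{h}_{T_j}\|_2$), I would get $\|\bm{h}\|_2\le\big(1+1/a(s,ks;\alpha)\big)\|\bm{h}_{T_{01}}\|_2+2\|\bm{x}_{-\max(s)}\|_1/(\sqrt{ks}-\alpha)$, and then substitute the bound on $\|\bm{h}_{T_{01}}\|_2$. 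To land on the exact constants in (\ref{e:stablel1upper}) I expect one needs the specific identity $1+1/a(s,ks;\alpha)=1+(\sqrt{s}+\alpha)/(\sqrt{ks}-\alpha)=(\sqrt{ks}+\sqrt{s})/(\sqrt{ks}-\alpha)$, and to bound $\sqrt{ks}+\sqrt{s}=(\sqrt{k}+1)\sqrt{s}\le 2\sqrt{k}\sqrt{s}$-type quantities, plus a crude lower bound like $\sqrt{ks}-\alpha\ge(\text{const})\sqrt{s}$ or $2\sqrt{2}\sqrt{s}-\alpha\le\text{something}$ to match the denominator $2\sqrt{2}\sqrt{s}-\alpha$ and the factor $2\sqrt{k}+1$ appearing in the theorem.

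The main obstacle is purely the constant-chasing in the last step: reconciling the clean bound I get from Lemmas~\ref{ConeconstraintinequalityforL1-L2}--\ref{LowerBound} with the precise coefficients $\frac{2(2\sqrt{k}+1)\sqrt{s}}{(2\sqrt{2}\sqrt{s}-\alpha)\rho_{ks}}$ and $\frac{\sqrt{s}}{2\sqrt{ks}-\alpha}\big(\frac{(2\sqrt{k}+1)(1+\delta_{ks}^{ub})\sqrt{s}}{\rho_{ks}(\sqrt{ks}-\alpha)}+1\big)$ in (\ref{e:stablel1upper}). This likely requires an intermediate use of Lemma~\ref{LocalEstimateL1-L2} or the second cone inequality (\ref{e:h-maxsupperbound2}) to pass from $\|\bm{h}_{T_{01}}\|_2$-type quantities to $\|\bm{h}\|_2$, and some slightly lossy numerical inequalities (e.g. $\sqrt{k}+1\le 2\sqrt{k}$, or relating $\sqrt{2}\sqrt{s}$ to $\sqrt{ks}$ when $k\ge2$) to produce the displayed form rather than a formally equivalent but uglier expression. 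No conceptual difficulty is anticipated beyond this; the structural content is entirely carried by the two auxiliary lemmas already proved, and feasibility of $\bm{x}$ gives the required comparison $\|\hat{\bm{x}}^{\ell_1}\|_{\alpha,1-2}\le\|\bm{x}\|_{\alpha,1-2}$ for free.
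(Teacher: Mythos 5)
Your first two-thirds coincides exactly with the paper's argument: feasibility of $\bm{x}$ gives $\|\hat{\bm{x}}^{\ell_1}\|_{\alpha,1-2}\le\|\bm{x}\|_{\alpha,1-2}$, the triangle inequality gives the tube constraint $\|\bm{Ah}\|_1\le 2\eta_1$, and Lemma~\ref{LowerBound} with $t=ks$ combined with $\rho_{ks}>0$ (from \eqref{e2.0}) yields precisely the paper's bound $\|\bm{h}_{T_{01}}\|_2\le\frac{2}{\rho_{ks}}\eta_1+\frac{(1+\delta_{ks}^{ub})\sqrt{s}}{(\sqrt{ks}-\alpha)\rho_{ks}}\cdot\frac{2\|\bm{x}_{-\max(s)}\|_1}{\sqrt{s}}$. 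The divergence, and the genuine gap, is in the tail estimate. Your proposal to bound $\|\bm{h}_{T_{01}^c}\|_2\le\sum_{j\ge2}\|\bm{h}_{T_j}\|_2\le\|\bm{h}_{T_{01}}\|_2/a(s,ks;\alpha)+2\|\bm{x}_{-\max(s)}\|_1/(\sqrt{ks}-\alpha)$ by recycling the block decomposition inside Lemma~\ref{LowerBound} produces the multiplier $1+1/a(s,ks;\alpha)=(\sqrt{k}+1)\sqrt{s}/(\sqrt{ks}-\alpha)$ on $\|\bm{h}_{T_{01}}\|_2$, which one can check is strictly larger than the theorem's $(2\sqrt{k}+1)\sqrt{s}/(2\sqrt{ks}-\alpha)$ (their difference is $\sqrt{k}(\sqrt{s}+\alpha)$ after cross-multiplication). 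So your route proves a correct but strictly weaker estimate; no amount of ``slightly lossy numerical inequalities'' will recover the stated constants, because your tail bound is off by essentially a factor of $2$ at the source.

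The device the paper actually uses is the interpolation
\begin{align*}
\|\bm{h}_{T_{01}^c}\|_2\le\sqrt{\|\bm{h}_{T_{01}^c}\|_1\,\|\bm{h}_{T_{01}^c}\|_\infty}
\le\sqrt{\big(\|\bm{h}_{T_0^c}\|_1-t|h_{s+t}|\big)|h_{s+t}|}\le\frac{\|\bm{h}_{T_0^c}\|_1}{2\sqrt{t}},
\end{align*}
where the last step completes the square in $|h_{s+t}|$; the factor $\tfrac{1}{2\sqrt{t}}$ (rather than your $\tfrac{1}{\sqrt{t}-\alpha}$) is exactly the gain you are missing. The paper then controls $\|\bm{h}_{T_0^c}\|_1$ by the \emph{first} cone inequality \eqref{e:h-maxsupperbound2}-type bound \eqref{e3.2}, which carries the term $\alpha\|\bm{h}\|_2$ on the right; consequently $\|\bm{h}\|_2=\sqrt{\|\bm{h}_{T_{01}}\|_2^2+\|\bm{h}_{T_{01}^c}\|_2^2}$ appears on both sides, and solving the resulting linear inequality for $\|\bm{h}\|_2$ (using $1-\alpha/(2\sqrt{ks})>0$) is what produces the denominator $2\sqrt{ks}-\alpha$ and the coefficient $2\sqrt{k}+1$ in \eqref{e:stablel1upper}. (As a side remark, the factor $2\sqrt{2}\sqrt{s}-\alpha$ in the $\eta_1$ term of the theorem statement is a typographical slip for $2\sqrt{ks}-\alpha$, as the paper's own final display and Remark~\ref{Simplercondition-StableRecoveryviaVectorL1-alphaL2-LAD} confirm.) To complete your proof you should replace your block-sum tail bound by this $\sqrt{\ell_1\cdot\ell_\infty}$ argument and carry out the rearrangement in $\|\bm{h}\|_2$.
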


\begin{remark}\label{Simplercondition-StableRecoveryviaVectorL1-alphaL2-LAD}
Similar to  the discussion in Remark \ref{Simplercondition-ExactRecoveryviaVectorL1-alphaL2-Exact}, when
\begin{align*}
\delta_{ks}^{ub}+(\sqrt{k}/2)\delta_{(k+1)s}^{lb}<\sqrt{k}/2-1,
\end{align*}
the solution $\hat{\bm{x}}^{\ell_1}$ of (\ref{VectorL1-alphaL2-LAD}) satisfies
\begin{align*}
&\|\hat{\bm{x}}^{\ell_1}-\bm{x}\|_2
\leq\frac{2(2\sqrt{k}+1)\sqrt{s}}{(2\sqrt{ks}-\alpha)\tilde{\rho}_{ks}}\eta_1\\
&+\frac{\sqrt{s}}{2\sqrt{ks}-\alpha}\Big(\frac{(2\sqrt{k}+1)(1+\delta_{ks}^{ub})
\sqrt{s}}{\tilde{\rho}_{ks}(\sqrt{ks}-\alpha)}+1\Big)
\frac{2\|\bm{x}_{-\max(s)}\|_1}{\sqrt{s}},
\end{align*}
where $\tilde{\rho}_{ks}=1-\delta_{(k+1)s}^{lb}-\frac{(1+\delta_{ks}^{ub})}{\sqrt{k}/2}$.
\end{remark}

Now, we consider the recovery model
\eqref{systemequationsnoise} with
$\|\bm{A}^{*}\bm{z}\|_\infty\leq \eta_2$.
\begin{theorem}\label{StableRecoveryviaVectorL1-alphaL2-DS}
Consider $\bm{b}=\bm{Ax}+\bm{z}$ with $\|\bm{A}^{*}\bm{z}\|_\infty\leq \eta_2$. For some $s\in[[1,n]]$
and $0<\alpha\leq 1$, let
$k>0$ such that $ks\in\mathbb{Z}_+$,  $a(s,ks;\alpha)=(\sqrt{ks}-\alpha)/(\sqrt{s}+\alpha)>2$ and $b(s, k; \alpha)=8(2\sqrt{ks}-\alpha)/(17\alpha(2\sqrt{k}+1))>1$ satisfying   $a(s,ks;\alpha)b(s, k; \alpha)<a(s,ks;\alpha)+b(s, k; \alpha)$. Let $\hat{x}^{DS}$ be the minimizer
of the  $\ell_{1}-\alpha \ell_{2}$ minimization model (\ref{VectorL1-alphaL2-DS}). If the  measurement matrix $\bm{A}$ satisfies the $(\ell_2,\ell_1)$-RIP condition with
\begin{align}\label{RIPCondition3}
&\big(b(s,k;\alpha)+1\big)\delta_{ks}^{ub}+a(s,ks;\alpha)b(s,k;\alpha)\delta_{(k+1)s}^{lb}\nonumber\\
&<a(s,ks;\alpha)b(s,k;\alpha)-b(s,k;\alpha)-1,
\end{align}
then
\begin{align*}
\|\hat{\bm{x}}^{DS}&-\bm{x}\|_2\\
&\leq\frac{\sqrt{s}\varrho}{\sqrt{s}-\alpha\varrho}\frac{2\|\textbf{x}_{-\max(s)}\|_1}{\sqrt{s}}\\
&\hspace*{12pt}+\frac{2(2\sqrt{k}+1)\big((1+\delta_{ks}^{ub})+a(s;\alpha,k)\rho_{ks}\big)ms}
{\sqrt{k}(\sqrt{s}-\alpha\varrho)(1+\delta_{ks}^{up})\rho_{ks}^2}\eta_2
\end{align*}
where $\varrho=\frac{1}{2\sqrt{k}}\bigg(\frac{17(2\sqrt{k}+1)(1+\delta_{ks}^{ub})}{16a(s,ks;\alpha)\rho_{ks}}+1\bigg)$.
\end{theorem}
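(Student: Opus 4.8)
The plan is to follow the template of the proof of Theorem~\ref{StableRecoveryviaVectorL1-alphaL2-LAD}; the only genuinely new ingredient is the way the Dantzig ($\ell_\infty$) constraint is converted into an upper bound on $\|\bm{A}\bm{h}\|_1$. Set $\bm{h}=\hat{\bm{x}}^{DS}-\bm{x}$. Since $\bm{b}=\bm{A}\bm{x}+\bm{z}$ with $\|\bm{A}^{*}\bm{z}\|_\infty\le\eta_2$, the true signal $\bm{x}$ is feasible for \eqref{VectorL1-alphaL2-DS}, so optimality of $\hat{\bm{x}}^{DS}$ forces $\|\hat{\bm{x}}^{DS}\|_{\alpha,1-2}\le\|\bm{x}\|_{\alpha,1-2}$. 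This is exactly the hypothesis of Lemma~\ref{ConeconstraintinequalityforL1-L2} and Lemma~\ref{LowerBound}, which I would invoke with $t=ks$, $T_0=\mathrm{supp}(\bm{h}_{\max(s)})$, $T_1$ the next $ks$ largest coordinates of $\bm{h}_{-\max(s)}$ and $T_{01}=T_0\cup T_1$; Lemma~\ref{LowerBound} gives
\begin{align*}
\|\bm{A}\bm{h}\|_1\ge\rho_{ks}\|\bm{h}_{T_{01}}\|_2-\frac{2(1+\delta_{ks}^{ub})}{\sqrt{ks}-\alpha}\,\|\bm{x}_{-\max(s)}\|_1,
\end{align*}
and the same chain of estimates as in its proof (which uses \eqref{e:h-maxsupperbound2}) gives $\sum_{j\ge2}\|\bm{h}_{T_j}\|_2\le a(s,ks;\alpha)^{-1}\|\bm{h}_{T_{01}}\|_2+2(\sqrt{ks}-\alpha)^{-1}\|\bm{x}_{-\max(s)}\|_1$. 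Observe that \eqref{RIPCondition3} implies \eqref{e2.0} (divide through by $b(s,k;\alpha)$ and use $b(s,k;\alpha)>1$), so $\rho_{ks}>0$ and the lower bound above is non-degenerate.

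Next I would bound $\|\bm{A}\bm{h}\|_1$ from above using only the Dantzig constraint. From $\bm{A}^{*}\bm{A}\bm{h}=-\bm{A}^{*}(\bm{b}-\bm{A}\hat{\bm{x}}^{DS})+\bm{A}^{*}\bm{z}$ and the feasibility of both $\hat{\bm{x}}^{DS}$ and $\bm{x}$, $\|\bm{A}^{*}\bm{A}\bm{h}\|_\infty\le 2\eta_2$. Because $(\ell_2,\ell_1)$-RIP controls only $\ell_1$ norms of $\bm{A}$ on sparse blocks, the standard $\ell_2$ inner-product (``tube'') estimate is not directly available, so I would pass through $\ell_2$:
\begin{align*}
\|\bm{A}\bm{h}\|_1\le\sqrt{m}\,\|\bm{A}\bm{h}\|_2,\qquad \|\bm{A}\bm{h}\|_2^2=\langle\bm{h},\bm{A}^{*}\bm{A}\bm{h}\rangle\le\|\bm{h}\|_1\,\|\bm{A}^{*}\bm{A}\bm{h}\|_\infty\le 2\eta_2\|\bm{h}\|_1,
\end{align*}
hence $\|\bm{A}\bm{h}\|_1\le\sqrt{2m\eta_2\|\bm{h}\|_1}$. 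To make this useful I would bound $\|\bm{h}\|_1$ in terms of $\|\bm{h}_{T_{01}}\|_2$: by \eqref{e:h-maxsupperbound1}, $\|\bm{h}\|_1\le 2\|\bm{h}_{\max(s)}\|_1+2\|\bm{x}_{-\max(s)}\|_1+\alpha\|\bm{h}\|_2$; then $\|\bm{h}_{\max(s)}\|_1\le\sqrt{s}\,\|\bm{h}_{T_{01}}\|_2$ and $\|\bm{h}\|_2\le\|\bm{h}_{T_{01}}\|_2+\sum_{j\ge2}\|\bm{h}_{T_j}\|_2$ combine with the estimate from the previous paragraph to yield $\|\bm{h}\|_1\le C_1\|\bm{h}_{T_{01}}\|_2+C_2\|\bm{x}_{-\max(s)}\|_1$, where $C_1$ is of order $\sqrt{s}$ and $C_2$ of order $1$, both explicit in $s,k,\alpha$.

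Putting the lower and upper bounds together, with $u=\|\bm{h}_{T_{01}}\|_2$ and $v=\|\bm{x}_{-\max(s)}\|_1$,
\begin{align*}
\rho_{ks}\,u-\frac{2(1+\delta_{ks}^{ub})}{\sqrt{ks}-\alpha}\,v\le\sqrt{2m\eta_2(C_1 u+C_2 v)}.
\end{align*}
If the left-hand side is $\le 0$ one already has $u\lesssim v/\rho_{ks}$, a term of the claimed shape; otherwise I would square and obtain a quadratic inequality in $u$. After a short case split on whether $C_1 u$ or $C_2 v$ dominates $\|\bm{h}\|_1$ (so as to keep only $u$ under the root), resolving the quadratic by a Young-type inequality $\sqrt{PQ}\le\tfrac12(\theta^{-1}P+\theta Q)$ with $\theta$ chosen so that the $u$-term absorbed into the left-hand side is a prescribed fraction of $\rho_{ks}^{2}$ is exactly where the free parameter $b(s,k;\alpha)$ enters. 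For the particular value $b(s,k;\alpha)=\frac{8(2\sqrt{ks}-\alpha)}{17\alpha(2\sqrt{k}+1)}$, the assumptions $a(s,ks;\alpha)>2$, $b(s,k;\alpha)>1$ and $a(s,ks;\alpha)b(s,k;\alpha)<a(s,ks;\alpha)+b(s,k;\alpha)$ together with \eqref{RIPCondition3} are precisely what guarantee that the effective coefficient of $u$ stays strictly positive after the absorption, and they are what force the $\rho_{ks}^{2}$ (two square roots $\Rightarrow$ a squared denominator) and the factor $m$ in the final estimate. This delivers a linear bound $u\le\varrho_0 v+\varrho_1 m\eta_2$ with explicit $\varrho_0,\varrho_1$; substituting it into $\|\bm{h}\|_2\le(1+a(s,ks;\alpha)^{-1})u+2(\sqrt{ks}-\alpha)^{-1}v$ and collecting terms produces the stated bound, with $\varrho$ the effective coefficient of the split and the denominator $\sqrt{s}-\alpha\varrho$ arising because the $\alpha\|\bm{h}\|_2$ contribution inside $\|\bm{h}\|_1$ couples $u$ back to $\|\bm{h}\|_2$ and must be moved across before dividing.

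The main obstacle is the middle step: under $(\ell_2,\ell_1)$-RIP one cannot lower-bound $\|\bm{A}\bm{h}_{T_{01}}\|_2$ by a multiple of $\|\bm{h}_{T_{01}}\|_2$, so the clean Cand\`{e}s--Tao ``tube $+$ cone'' decomposition is unavailable; the detour through $\|\bm{A}\bm{h}\|_1\le\sqrt{m}\,\|\bm{A}\bm{h}\|_2$ is essentially forced, it introduces the factor $m$, and because it leaves $\|\bm{A}\bm{h}\|_1$ controlled only by a square root it turns the usual linear estimate into a quadratic one -- which is why the error bound carries $\rho_{ks}^{2}$ and why one needs the strengthened RIP condition \eqref{RIPCondition3} rather than \eqref{e2.0}. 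Everything else is careful but routine bookkeeping needed to land on the precise constants $\varrho$ and $b(s,k;\alpha)$.
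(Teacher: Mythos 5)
Your proposal is correct and follows essentially the same route as the paper's proof: the same cone constraint and Lemma~\ref{LowerBound} lower bound, the same detour $\|\bm{A}\bm{h}\|_1\le\sqrt{m}\sqrt{\|\bm{A}^{*}\bm{A}\bm{h}\|_\infty\|\bm{h}\|_1}$ to exploit the Dantzig constraint, the same resulting quadratic inequality in $\|\bm{h}_{T_{01}}\|_2$ resolved by a case split and a Young-type absorption with a free parameter (the paper's $\varepsilon$, fixed so that $b(s,k;\alpha)$ and \eqref{RIPCondition3} give $\varrho<\sqrt{s}/\alpha$), and the same final coupling argument producing the $\sqrt{s}-\alpha\varrho$ denominator. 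The only minor deviation is that the paper bounds $\|\bm{h}_{T_{01}^c}\|_2$ via the sharper estimate \eqref{e3.9} (giving the $1+\tfrac{1}{2\sqrt{k}}$ factors) rather than the cruder block sum you sketch, which affects only the explicit constants.
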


\begin{remark}\label{Simplercondition-StableRecoveryviaVectorL1-alphaL2-DS}
The conditions in Theorem \ref{StableRecoveryviaVectorL1-alphaL2-DS} seem strict. In fact,
these  conditions can be satisfied. For example, for $\alpha=1$, if we take $k=16$, then
$$
a(s,ks;\alpha)=\frac{4\sqrt{s}-1}{\sqrt{s}+1}=:a(s), b(s,k;\alpha)=\frac{8(8\sqrt{s}-1)}{153}=:b(s).
$$
If we restrict $7\leq s\leq 14$, we can check that $a(s)>2$, $b(s)>1$ and $a(s)b(s)<a(s)+b(s)$. Therefore, $(\ell_2,\ell_1)$-RIP condition (\ref{RIPCondition3}) can be formulated as
$$
\big(b(s)+1\big)\delta_{ks}^{ub}+a(s)b(s)\delta_{(k+1)s}^{lb}<a(s)b(s)-b(s)-1.
$$
And if we take $\delta_s^{lb}=\delta_s^{ub}=\delta_s$ in Remark \ref{lqRIP-Remark}, then condition (\ref{RIPCondition3}) can be simplified as
$$
\delta_{17s}<\frac{192s-305\sqrt{s}-137}{320s+113\sqrt{s}+153}.
$$
\end{remark}

\section{Computational Approach for $\ell_{1}-\alpha\ell_{2}$-PLAD \label{s4}}
\hskip\parindent

In this section, we consider how to  solve the unconstraint $\ell_{1}-\alpha \ell_{2}$-PLAD problem (\ref{VectorL1-alphaL2-PLAD}).
First, by splitting  the term $\|\bm{A}\bm{x}-\bm{b}\|_1$, we get an equivalent problem of (\ref{VectorL1-alphaL2-PLAD}) as follows
\begin{align}\label{VectorL1-2-LAD-Equivalent}
\min_{\bm{x}\in\mathbb{R}^n, \bm{y}\in\mathbb{R}^m}\lambda(\|\bm{x}\|_1-\alpha\|\bm{x}\|_2)+\|\bm{y}\|_1
~\text{subject~to~}\bm{A}\bm{x}-\bm{y}=\bm{b}.
\end{align}
Let
\begin{align}\label{ALagrange-LAD}
\mathcal{L}_{\gamma}(\bm{x},\bm{y};\bm{w})
=&\lambda(\|\bm{x}\|_1-\alpha\|\bm{x}\|_2)+\|\bm{y}\|_1
 \nonumber\\
&-\langle \bm{w}, \bm{A}\bm{x}-\bm{y}-\bm{b}\rangle+\frac{\gamma}{2}\|\bm{A}\bm{x}-\bm{y}-\bm{b}\|_2^2\nonumber\\
=&\lambda(\|\bm{x}\|_1-\alpha\|\bm{x}\|_2)+\|\bm{y}\|_1\nonumber\\
&+\frac{\gamma}{2}\Big\|\bm{A}\bm{x}-\bm{y}-\bm{b}-\frac{\bm{w}}{\gamma}\Big\|_2^2
-\frac{1}{2\gamma}\|\bm{w}\|_2^2,
\end{align}
which is  the augmented Lagrangian function of (\ref{VectorL1-2-LAD-Equivalent}) with the Lagrangian multiplier $\bm{w}\in\mathbb{R}^m$ and a penalty parameter $\gamma>0$.
Given $(\bm{x},\bm{y};\bm{w})\in\mathbb{R}^n\times\mathbb{R}^m\times\mathbb{R}^m$,  
iterations for (\ref{ALagrange-LAD}) are
\begin{align}\label{ADMM-LAD}
\begin{cases}
\bm{x}^{k+1}=\arg\min_{\bm{x}\in\mathbb{R}^n}\mathcal{L}_{\gamma}(\bm{x},\bm{y}^k;\bm{w}^k),\\
\bm{y}^{k+1}=\arg\min_{\bm{y}\in\mathbb{R}^m}\mathcal{L}_{\gamma}(\bm{x}^{k+1},\bm{y};\bm{w}^{k}),\\
\bm{w}^{k+1}=\bm{w}^k-\gamma(\bm{A}\bm{x}^{k+1}-\bm{y}^{k+1}-\bm{b}).\\
\end{cases}
\end{align}

Now, we move to  consider (\ref{ADMM-LAD}).
By \eqref{ALagrange-LAD}, the  $\bm{x}$-related subproblem in \eqref{ADMM-LAD} is
 equivalent to
\begin{align}\label{x-subproblem-Equivalent}
\bm{x}^{k+1}&=\arg\min_{\bm{x}\in\mathbb{R}^n}
\frac{\gamma}{2}\bigg\|\bm{A}\bm{x}-\bigg(\bm{b}+\bm{y}^k+\frac{\bm{w}^k}{\gamma}\bigg)\bigg\|_2^2
+\lambda\|\bm{x}\|_1\nonumber\\
&\hspace*{12pt}-\lambda\alpha\|\bm{x}\|_2\nonumber\\
&=\arg\min_{\bm{x}\in\mathbb{R}^n}\bigg(\frac{\gamma}{2}\|\bm{A}\bm{x}-\bar{\bm{b}}^k\|_2^2
+\lambda\|\bm{x}\|_1\bigg)-\lambda\alpha\|\bm{x}\|_2\nonumber\\
&=\arg\min_{\bm{x}\in\mathbb{R}^n}\mathcal{E}(\bm{x})-\mathcal{F}(\bm{x}),
\end{align}
where the second equality is from $\bar{\bm{b}}^k=\bm{b}+\bm{y}^k+\bm{w}^k/\gamma$,
and the last equality is due to $\mathcal{E}(\bm{x})=\frac{\gamma}{2}\|\bm{A}\bm{x}-\bar{\bm{b}}^k\|_2^2
+\lambda\|\bm{x}\|_1$ and $\mathcal{F}(\bm{x})=\lambda\alpha\|\bm{x}\|_2$.
In terms of the analysis for  \cite[(3.1)]{YLHX2015},
we  solve $\bm{x}$-related subproblem (\ref{x-subproblem-Equivalent})  using the DCA.
To implement the DCA, one iteratively computes
\begin{align*}
\bm{x}^{k+1}&=\arg\min_{\bm{x}\in\mathbb{R}^n}\mathcal{E}(\bm{x})-(\mathcal{F}(\bm{x}^k)+\langle \bm{h}^k, \bm{x}-\bm{x}^k \rangle),
\end{align*}
where $\bm{h}^k\in\partial \mathcal{F}(\bm{x}^k)$. Note that $\mathcal{F}(\bm{x})$ is differentiable with the gradient
\begin{align*}
\partial \mathcal{F}(\bm{x})
\begin{cases}
=\lambda\alpha\frac{\bm{x}}{\|\bm{x}\|_2},&\text{for~all~}\bm{x}\neq \bm{0};\\
\ni \bm{0}, &\bm{x}=\bm{0}.
\end{cases}
\end{align*}
Therefore, if $\bm{x}^k=\bm{0}$,
$$
\bm{x}^{k+1}=
\arg\min_{\bm{x}\in\mathbb{R}^n}\frac{\gamma}{2}\|\bm{A}\bm{x}-\bar{\bm{b}}^k\|_2^2
+\lambda\|\bm{x}\|_1,
$$
otherwise,
$$
\bm{x}^{k+1}=\arg\min_{\bm{x}\in\mathbb{R}^n}\frac{\gamma}{2}\|\bm{A}\bm{x}-\bar{\bm{b}}^k\|_2^2
+\lambda\|\bm{x}\|_1-\lambda\alpha\langle \bm{x}, \frac{\bm{x}}{\|\bm{x}\|_2}\rangle.
$$
Thus the strategy to iterate is as follows:
\begin{align}\label{DCA-LAD-1}
\bm{x}^{k+1}
&=\arg\min_{\bm{x}\in\mathbb{R}^n}\frac{\gamma}{2}\|\bm{A}\bm{x}-\bar{\bm{b}}^k\|_2^2
+\lambda\|\bm{x}\|_1-\lambda\alpha\langle \bm{x}, \bm{v}^k\rangle\nonumber\\
&=:\arg\min_{\bm{x}\in\mathbb{R}^n}\mathcal{G}_{\gamma}(\bm{x},\bar{\bm{b}}^k),
\end{align}
where
\begin{align}\label{Subgradient-L2}
\bm{v}^k=
\begin{cases}
\frac{\bm{x}^k}{\|\bm{x}^k\|_2},&\text{if~}\bm{x}^k\neq \bm{0};\\
\bm{0}, &\text{if}~\bm{x}^k=\bm{0}.
\end{cases}
\end{align}

By taking subdifferential of $\mathcal{G}_{\gamma}(\bm{x},\bar{\bm{b}}^k)$ at $\bm{x}=\bm{x}^{k+1}$, we have
$$
\bm{0}=\gamma \bm{A}^*\bm{A}\bm{x}^{k+1}+\lambda\partial\|\bm{x}^{k+1}\|_1
-\bigg(\gamma \bm{A}^*\bar{\bm{b}}^k+\lambda\alpha\bm{v}^{k}\bigg).
$$
Whenever $\bm{A}^*\bm{A}=c\bm{I}_{n}$, which essentially implies that the columns of the design matrix $\bm{A}$ are orthogonal, the closed-form solution of (\ref{DCA-LAD-1}) is given by the soft shrinkage operator. However, the assumption $m\leq n$ indicates that the rank of $\bm{A}$ is no bigger than $m$ and thus the rank of $\bm{A}^*\bm{A}$ should be much smaller than $n$. Therefore, $\bm{A}^*\bm{A}$ is not the identity matrix in $\mathbb{R}^{n\times n}$ when $m\leq n$, and the closed-form solution of (\ref{DCA-LAD-1}) is not available for this case.

To alleviate  the above difficulty,
we adopt the strategy of linearizing the quadratic term, which comes from Wang and Yuan \cite{WY2012}.  In fact, the quadratic term $\frac{\gamma}{2}\|\bm{A}\bm{x}-\bar{\bm{b}}^k\|_2^2$ can be linearized:
\begin{align*}
&\frac{\gamma}{2}\|\bm{A}\bm{x}-\bar{\bm{b}}^k\|_2^2\nonumber\\
&\approx
\frac{\gamma}{2}\bigg(\|\bm{A}\bm{x}^k-\bar{\bm{b}}^k\|_2^2
+\Big\langle 2\bm{A}^*(\bm{A}\bm{x}^k-\bar{\bm{b}}^k),\bm{x}-\bm{x}^k\Big\rangle\nonumber\\
&\hspace*{12pt}+\frac{1}{\mu}\|\bm{x}-\bm{x}^k\|_2^2\bigg)\\
&=\frac{\gamma}{2}\bigg(\|\bm{A}\bm{x}^k-\bar{\bm{b}}^k\|_2^2
+\frac{1}{\mu}\|\bm{x}-\bm{x}^k+\mu \bm{A}^*(\bm{A}\bm{x}^k-\bar{\bm{b}}^k)\|_2^2\nonumber\\
&\hspace*{12pt}-\mu\|\bm{A}^*(\bm{A}\bm{x}^k-\bar{\bm{b}}^k)|_2^2\bigg).
\end{align*}
Then we can approximate subproblem of (\ref{DCA-LAD-1}) by
\begin{align}\label{DCA-LAD-2}
\bm{x}^{k+1}
&=\arg\min_{\bm{x}\in\mathbb{R}^n}
\frac{\gamma}{2\mu}\|\bm{x}-\bm{x}^k+\mu \bm{A}^*(\bm{A}\bm{x}^k-\bar{\bm{b}}^k)\|_2^2\nonumber\\
&\hspace*{12pt}+\lambda\|\bm{x}\|_1-\lambda\alpha\langle \bm{x}, \bm{v}^k\rangle\nonumber\\
&=:\arg\min_{x\in\mathbb{R}^n}\mathcal{H}_{\gamma,\lambda}(\bm{x},\bar{\bm{b}}^k).
\end{align}

By taking subdifferential of $\mathcal{H}_{\gamma,\lambda}(\bm{x},\bar{\bm{b}}^k)$ at $\bm{x}=\bm{x}^{k+1}$, we have
\begin{align*}
\bm{0}&=\frac{\gamma}{\mu}\Bigg(\bm{x}^{k+1}+\frac{\lambda\mu}{\gamma}\partial\|\bm{x}^{k+1}\|_1\nonumber\\
&\hspace*{12pt}-\bigg(\bm{x}^k-\mu \bm{A}^*(\bm{A}\bm{x}^k-\bar{\bm{b}}^k)+\frac{\lambda\alpha\mu}{\gamma}\bm{v}^{k+1}\bigg)\Bigg).
\end{align*}
Therefore,
\begin{align}\label{x-ADMM-LAD}
&\bm{x}^{k+1}\nonumber\\
&=S\Bigg(\bm{x}^k-\mu \bm{A}^*\bigg(\bm{A}\bm{x}^k-\bm{b}-\bm{y}^k-\frac{\bm{w}^k}{\gamma}\bigg)
+\frac{\lambda\alpha\mu}{\gamma}\bm{v}^{k+1},
\frac{\lambda\mu}{\gamma}\Bigg)
\end{align}
where
\begin{align*}
(S(\bm{x}, r))_i=\text{sign}(x_i)\max\{|x_i|-r,0\}
\end{align*}
is the soft thresholding operator.

Next, we turn our attention to deal with $\bm{y}$-related subproblem in \eqref{ADMM-LAD}. The $\bm{y}$-related subproblem is just a constrained least squares
problem
\begin{align*}
\bm{y}^{k+1}&=\arg\min_{\bm{y}\in\mathbb{R}^m}\mathcal{L}_{\gamma}(\bm{x}^{k+1},\bm{y};\bm{w}^{k})\\
&=\arg\min_{\bm{y}\in\mathbb{R}^m}\|\bm{y}\|_1
+\frac{\gamma}{2}\bigg\|\bm{y}-\Big(\bm{A}\bm{x}^{k+1}-\bm{b}-\frac{\bm{w}^{k}}{\gamma}\Big)\bigg\|_2^2,
\end{align*}
which implies that
\begin{align}\label{y-ADMM-LAD}
\bm{y}^{k+1}=S\Bigg(\bm{A}\bm{x}^{k+1}-\bm{b}-\frac{\bm{w}^{k+1}}{\gamma},
\frac{1}{\gamma}\Bigg).
\end{align}

Now, we  present the algorithm applying the linearized ADMM and DCA to
solve the unconstrained $\ell_{1}-\alpha \ell_{2}$-PLAD  problem (\ref{VectorL1-alphaL2-PLAD}).
\begin{algorithm}\label{al:LADMg}
\centering
\caption{$\ell_{1}-\alpha\ell_{2}$LA for solving (\ref{VectorL1-alphaL2-PLAD})}
\vspace{-0mm}
\begin{tabular}{@{}ll}
\textbf{~~Input}~~$\bm{A}$, $\bm{b}$, $\alpha$, $\lambda$, $\gamma$,
$\mu$,  $(\bm{x}^0,\bm{y}^0;\bm{w}^0)$, $k=1$ .\\

~~\textbf{While}~~some stopping criterion is not satisfied \textbf{do}\\
~~~~\textbf{1.}\ \ \ \ Compute $\bm{v}^{k}$ by (\ref{Subgradient-L2}).\\
~~~~\textbf{2.}\ \ \ \ Update $\bm{x}^{k+1}$ by (\ref{x-ADMM-LAD}).\\
~~~~\textbf{3.}\ \ \ \ Update $\bm{y}^{k+1}$ by (\ref{y-ADMM-LAD}).\\
~~~~\textbf{4.}\ \ \ \ $\bm{w}^{k+1}=\bm{w}^k-\gamma(\bm{A}\bm{x}^{k+1}-\bm{b}-\bm{y}^{k})$.\\
~~~~\textbf{5.}\ \ \ \ $k=k+1$.\\

\textbf{~~End}
\vspace{-3.5pt} \\
\end{tabular}
    \end{algorithm}

\begin{remark}
In Algorithm 1,
$\alpha$ is a model parameter  and satisfies  $0<\alpha\leq 1$, $\lambda>0$ is a
 penalty parameter,  $\gamma>0, 0<\mu<1/{\|\bm{A}^{*}\bm{A}\|_{2\rightarrow 2}}$ are regularized parameters.
\end{remark}

\section{Numerical Experiments of $\ell_{1}-\alpha\ell_{2}$-PLAD\label{s5}}
\hskip\parindent

In this section, we will present numerical experiments for sparse signals and compressible images to demonstrate the efficiency of $\ell_{1}-\alpha\ell_{2}$LA algorithm.

\subsection{Sparse Signal Recovery\label{s5.1}}
\hskip\parindent

In this subsection, we apply the proposed $\ell_1-\alpha \ell_2$LA algorithm to reconstruct sparse signals.
We also compare our $\ell_1-\alpha \ell_2$LA numerically with some efficient methods in the literature, including YALL1 \cite{YZ2011} for penalized LAD model
\begin{align}\label{VectorL1-PLAD}
\min_{\bm{x}\in\mathbb{R}^n}~\lambda\|\bm{x}\|_{1}+\|\bm{A}\bm{x}-\bm{b}\|_1,
\end{align}
and LqLA-ADMM \cite{WPYYL2017}
\begin{align}\label{VectorLq-PLAD}
\min_{\bm{x}\in\mathbb{R}^n}~\lambda\|\bm{x}\|_{q}^q+\|\bm{A}\bm{x}-\bm{b}\|_{1,\varepsilon}
\end{align}
with $\varepsilon>0$ is an approximation parameter, where $\|\bm{y}\|_{1,\varepsilon}=\sum_{j}(y_j^2+\varepsilon^2)^{1/2}$ and $0<q<1$.

We consider two types of impulsive noises \cite{W2013,WLLQY2016,WPYYL2017}.

(1) Gaussian Mixture Noise \cite{BG1986,TMM1985,SG1994}: we consider a typical two-term Gaussian mixture
model with probability density function (pdf) given by
$$
(1-\xi)\mathcal{N}(0,\sigma^2)+\xi\mathcal{N}(0,\kappa\sigma^2),
$$
where $0\leq\xi<1$ and $\kappa>1$, i.e., part of the noise variables $z_j$ are $\mathcal{N}(0,\sigma^2)$ random variables and part of them are $\mathcal{N}(0,\kappa\sigma^2)$ random variables. Here the two parameters $\xi$ and $\kappa>1$ respectively control the ratio and the strength of outliers in the noise. And the first term stands for the nominal background noise, e.g., Gaussian thermal noise, while the second term describes the impulsive behavior of the noise.

(2) Symmetric $\tau$-stable ($S\tau S$) Noise \cite{ST1994,N2012}: Except for a few known cases, the $S\tau S$ distributions do not have analytical formulations. The characteristic function of a zero-location
$S\tau S$ distribution can be expressed as
$$
\phi(\omega)=\exp(j\tau\omega-\gamma^{\tau}|\omega|^{\tau}),
$$
where $0<\tau\leq2$ is the characteristic exponent and $\gamma>0$ is the scale parameter or dispersion. The characteristic exponent measures the thickness of the tail of the distribution. The smaller the value of $\tau$, the heavier the tail of the distribution and the more impulsive the noise is. We can see that the $S\tau S$ distribution becomes the Gaussian distribution with variance $2\gamma^2$ when $\tau=2$, and it reduces to the Cauchy distribution when $\tau=1$.
The symmetric $1$-stable noise is heavy tail noise.

In our experiments, we test two classes measurement matrices with different coherence.
The coherence of a matrix $\bm{A}$ is the maximum absolute value of
the cross-correlations between the columns of $\bm{A}$, namely,
$$
\mu(\bm{A}):=\max_{i\neq j}\frac{|\langle \bm{A}_i,\bm{A}_j\rangle|}{\|\bm{A}_i\|_2\|\bm{A}_j\|_2}.
$$
This concept is introduced in \cite{DH2001}.

The first class: $\bm{A}$ is a random Gaussian matrix, 
i.e.,
$$
\bm{A}_i\sim \mathcal{N}(0,I_m/m),~i=1,\ldots,n.
$$
which is incoherent and having small RIP constants with high probability.

The second class: $\bm{A}$ is a more ill-conditioned sensing matrix of significantly higher coherence.
Here, $\bm{A}$ is a randomly oversampled partial DCT matrix, which  is defined as
\begin{align*}
\bm{A}_i = \frac{1}{\sqrt{m}}\cos(2\pi\xi/F)
\end{align*}
where $\xi\in\mathbb{R}^m\sim\mathcal{U}([0,1]^m)$  the uniformly and independently distributed in $[0,1]^m$ , and $F\in\mathbb{N}$ is the refinement factor. Actually it is the real part of the random partial Fourier matrix analyzed in \cite{FL2011}. The number $F$ is closely related to the conditioning of $\bm{A}$ in the sense that $\mu(\bm{A})$ tends to get larger as $F$ increases. For $\bm{A}\in\mathbb{R}^{32\times 640}$, $\mu(\bm{A})$ easily exceeds 0.99 when $F =10$.  Although $\bm{A}$ sampled in this way does not have good RIP by any means, it is still possible to recover the sparse signal $\bm{x}$ provided its spikes are sufficiently separated.

\begin{figure*}[htbp!]
\centering
\begin{minipage}{5cm}
\centering
\includegraphics[width=5cm,height=5cm]{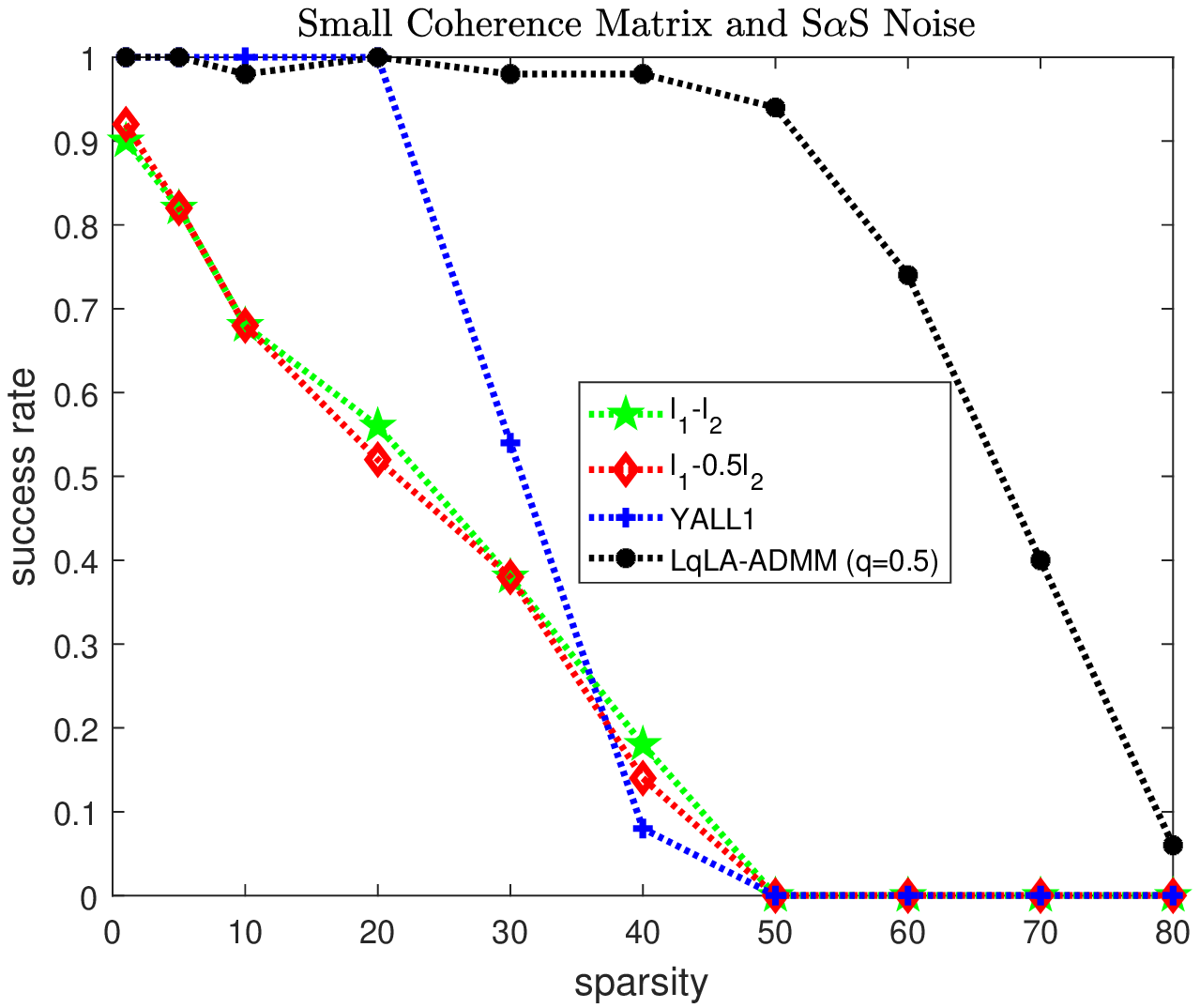}
\end{minipage}
\centering
\begin{minipage}{5cm}
\centering
\includegraphics[width=5cm,height=5cm]{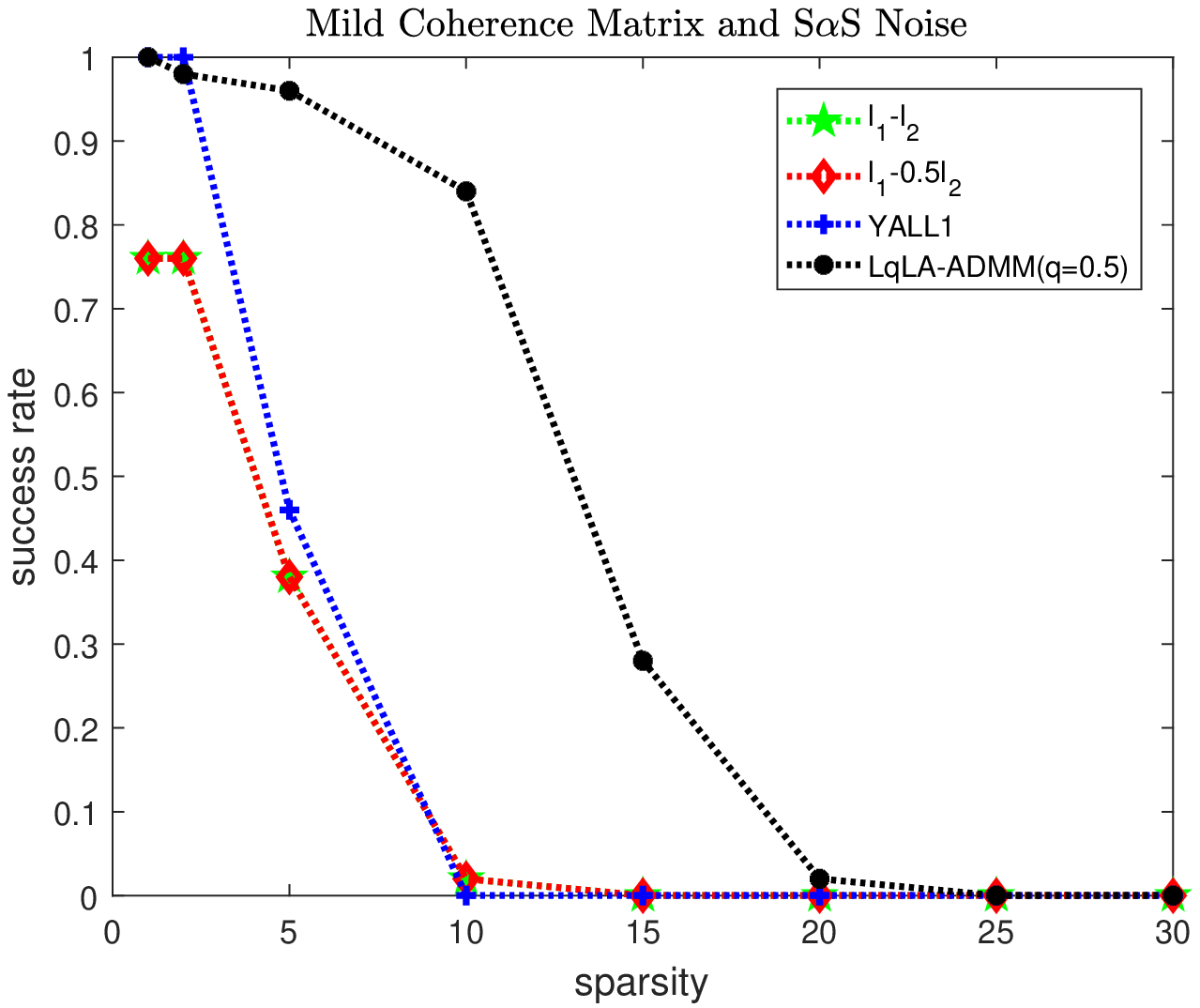}
\end{minipage}
\centering
\begin{minipage}{4cm}
\centering
\includegraphics[width=5cm,height=5cm]{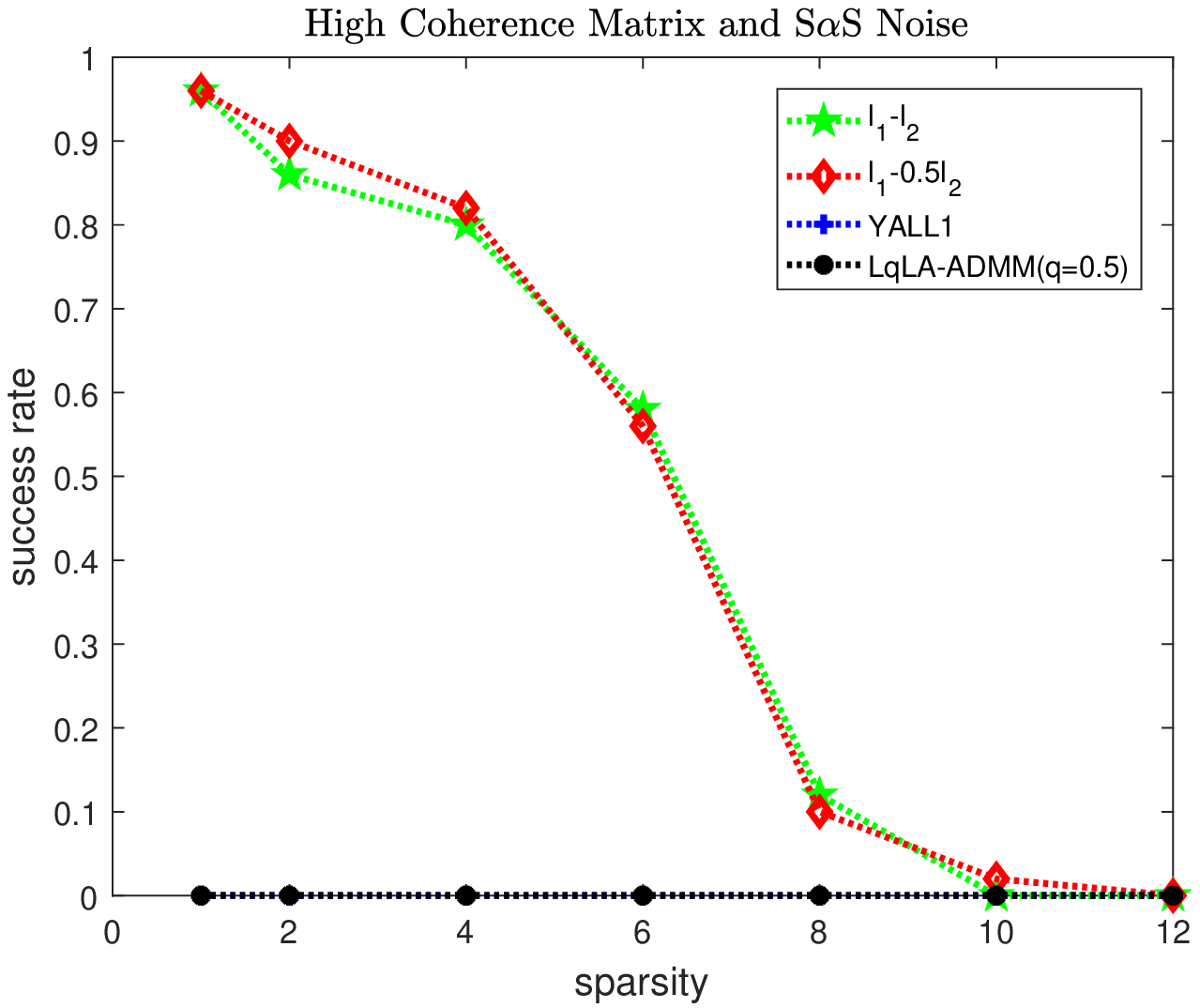}
\end{minipage}
\\
\caption{\label{f4.1} $S\tau S$ noise. Left: $m=128, n=
256, s=1,5,10, 20,\ldots,80$, $\bm{A}\in\R^{m\times n}$ has small coherence with $\mu(\bm{A})<0.35$; Middle: $m=64, n=1024, s=1,2,5,10,15,\ldots,30$,    $\bm{A}\in\R^{m\times n}$ has mild coherence with $0.5<\mu(\bm{A})<0.6$;
Right:  $m=32, n=640, s=1,2,4,6,\ldots,12$, $\bm{A}\in\R^{m\times n}$ has  high coherence with $\mu(\bm{A})>0.99$.}	
\end{figure*}
\begin{figure*}[htbp!]
\centering
\begin{minipage}{5cm}
\centering
\includegraphics[width=5cm,height=5cm]{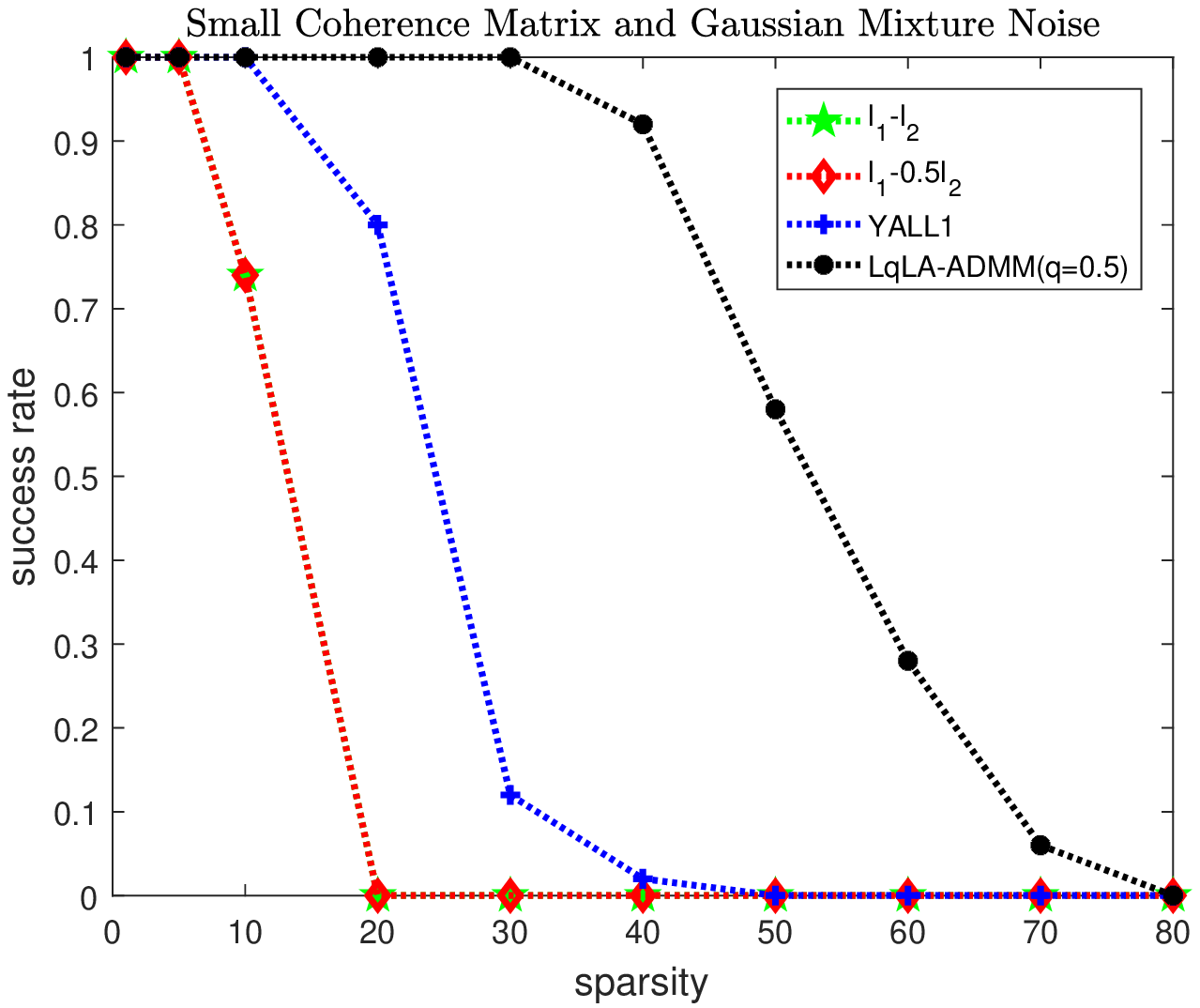}
\end{minipage}
\centering
\begin{minipage}{5cm}
\centering
\includegraphics[width=5cm,height=5cm]{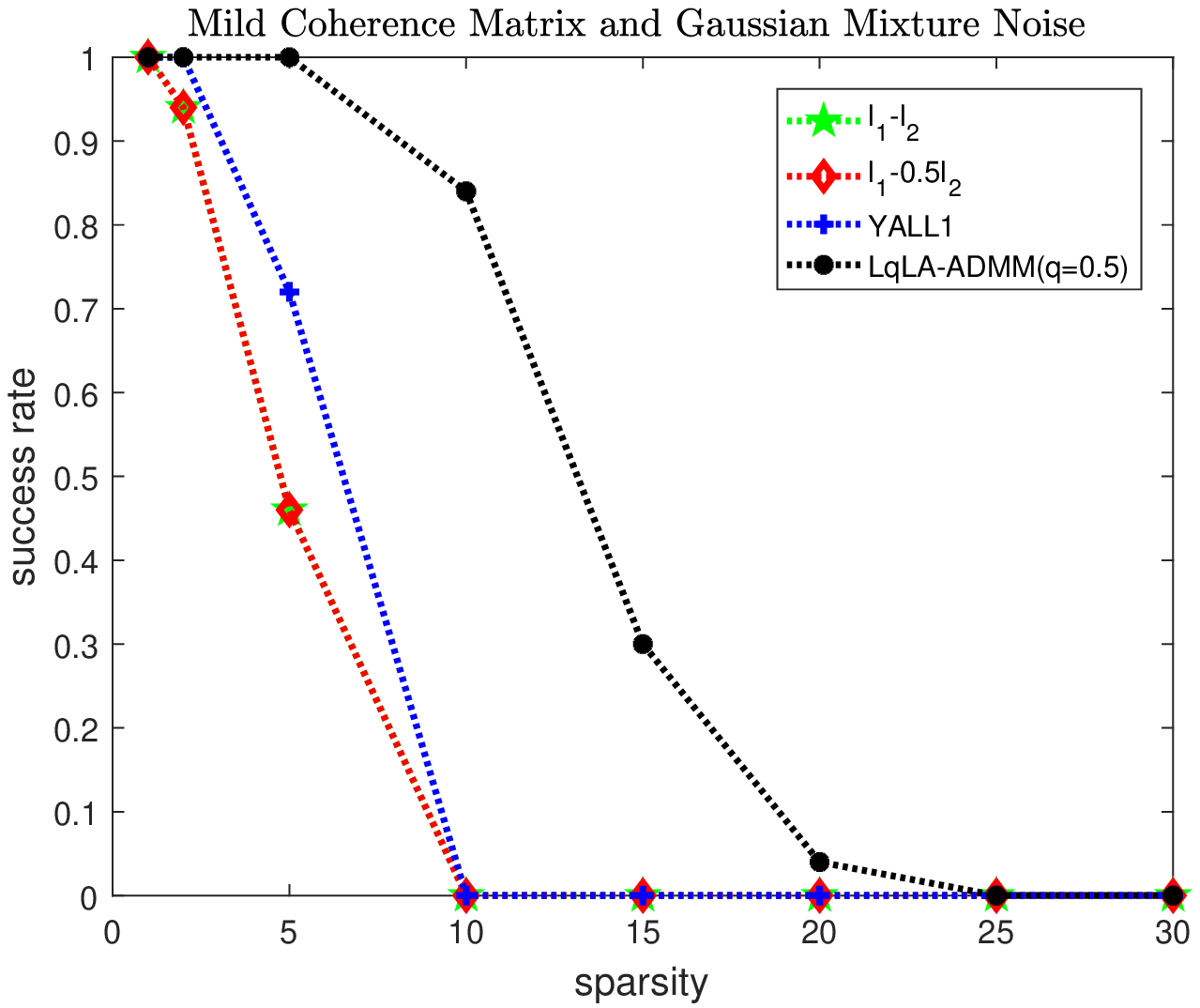}
\end{minipage}
\centering
\begin{minipage}{4cm}
\centering
\includegraphics[width=5cm,height=5cm]{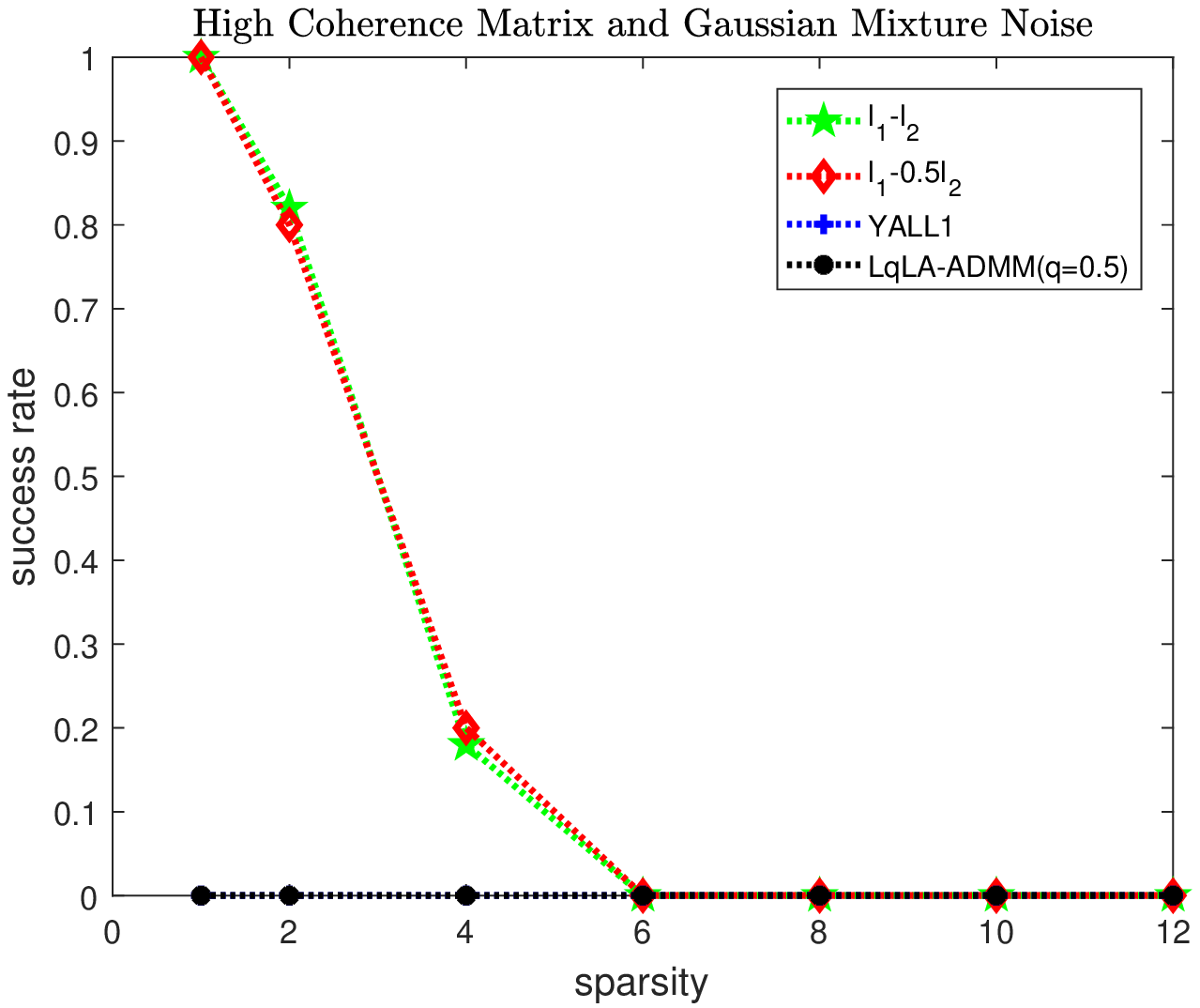}
\end{minipage}
\\
\caption{\label{f4.2}Gaussian mixture noise. Left: $m=128, n=
256, s=1,5,10,20,\ldots,80$, $\bm{A}\in\R^{m\times n}$ has small coherence with $\mu(\bm{A})<0.35$; Middle: $m=64, n=1024, s=1,2,5,10,15,\ldots,30$,   $\bm{A}\in\R^{m\times n}$ has mild coherence with $0.5<\mu(\bm{A})<0.6$;
Right:  $m=32, n=640, s=1,2,4,6,\ldots,12$, $\bm{A}\in\R^{m\times n}$ has  high coherence with $\mu(\bm{A})>0.99$.}	
\end{figure*}

In our experiments, let  $\bm{x}\in\R^{n}$ be a simulated $s$-sparse signal,
 where the support of $\bm{x}$ is a random index set and
  the $s$ non-zeros entries obey the Gaussian distribution $\mathcal{N}(0,1)$.
we evaluate the compared methods using simulated sparse signals in various noise conditions.
In addition, the signal $\bm{x}$ is normalized to have a unit energy value.
Let $\bm{\hat{\bm{x}}}$ be a reconduction of $\bm{x}$ by apply each solver (YALL1 \cite{YZ2011}, LqLA-ADMM$(0<q<1)$ \cite{WPYYL2017} and  proposed $\ell_1-\alpha\ell_2$LA).
If
$$\|\hat{\bm{x}}-\bm{x}\|_2/\|\bm{x}\|_2\leq 10^{-2},$$
the reconstruction is a success.
 Each provided result is an average over 100 independent Monte Carlo runs.

For both $S\tau S$ noise and Gaussian mixture noise, we respectively
 design three experiments.
In the first experiment, the  sensing
matrix $\bm{A}\in \R^{m\times n}$ is   orthonormal Gaussian random matrix with $m=128, n=256$,
 which has small coherence smaller than 0.35.
 In the second experiment, let $m=64$, $n=1280$ and
 the sensing matrix $\bm{A}\in \R^{m\times n}$  be  orthonormal Gaussian random matrix, which has mild coherence between 0.5 and 0.65.
 In the third experiment, let the  sensing
matrix $\bm{A}\in \R^{m\times n}$  be  oversampled partial DCT matrix with $m=32$ and $n=640$, and it  has high coherence larger than 0.99.

Fig. \ref{f4.1} presents the successful rates of recovery  for the YALL1, the LqLA-ADMM$(q=0.5)$ and  the proposed $\ell_1-\alpha\ell_2$LA ($\alpha=1, 0.5$) versus the sparsity $s$ in the $S\tau S$ noise case
with $\tau=1$ (Cauchy noise) and $\gamma=10^{-4}$.
In the left figure of  Fig. \ref{f4.1}, we observe
the LqLA-ADMM$(q=0.5)$ has the best performance, followed by YALL1.
In the middle  figure of  Fig. \ref{f4.1}, the LqLA-ADMM$(q=0.5)$ still has the best performance. But, the difference between LqLA-ADMM$(q=0.5)$
and $\ell_1-\alpha\ell_2$LA becomes smaller.
 However, in the right figure, $\ell_{1}-\alpha\ell_{2}$-PLAD is the best and provides the robust performance regardless of large coherence of $\bm{A}$. And the LqLA-ADMM$(q=0.5)$ and YALL1 have lost efficiency.

And Fig. \ref{f4.2} presents the successful rates of recovery of the compared algorithms versus sparsity $s$ in Gaussian mixture noise with $\xi=0.1$ and $\kappa=1000$. In Fig. \ref{f4.2}, we observe  the same conclusions for
this case as that in $S\tau S$ noise.

\subsection{MRI Reconstruction\label{s5.2}}
\hskip\parindent

In this subsection, we present a two-dimensional example of the reconstruction for  MRI from a limited number of projections.
It was first introduced in \cite{CRT2006-1} to demonstrate the success of compressed sensing. The signal/image is a Shepp-Logan phantom of size $256\times 256$. See Fig. \ref{f5.1}. In this case, the gradient of the signal is
sparse. Thus \cite{CRT2006-1,LDP2007} proposed a model to minimize the (isotropic) total variation (TV) \cite{ROF1992}, i.e.,
\begin{align}\label{TV-BP}
\min\|\bm{u}\|_{TV}~~~\text{subject~ to}~ ~\bm{R}\mathcal{F}\bm{u}=\bm{b},
\end{align}
where $\|\bm{u}\|_{TV}=\|\sqrt{|\mathcal{D}_x\bm{u}|^2+|\mathcal{D}_y\bm{u}|^2}\|_1$ with $\mathcal{D}_x, \mathcal{D}_y$ respectively denoting the horizontal and vertical partial derivative operators, 
$\mathcal{F}$  is  the Fourier transform, $\bm{R}$ is the sampling mask in the frequency
space, and $\bm{b}$ is the data.  It is claimed in \cite{CRT2006-1} that 22 projections are necessary to
achieve  exact recovery. Later, some works suggest that imposing nonconvex metrices on gradients can achieve exact recovery from fewer numbers of projections, for example $\ell_q~(0<q<1)$ \cite{C2007} using 10 projections, truncated $\ell_{1}$ \cite{GY2012} using 8 projections. More results about MRI reconstruction, readers can refer to \cite{CE2005,NNT2010,CNZ2012,NNT2013,ZBN2017} and so on.

Recently,  Lou, et.al. \cite{LZOX2015} proposed the following weighted difference of convex regularization
\begin{align}\label{L1alphaL2TV-Lasso}
\min&\bigg(\|\mathcal{D}_x \bm{u}\|_1+\|\mathcal{D}_y\bm{u}\|_1-\alpha\Big\|\sqrt{|\mathcal{D}_x\bm{u}|^2
+|\mathcal{D}_y\bm{u}|^2}\Big\|_1\bigg)\nonumber\\
&+\frac{\mu}{2}\|\bm{R}\mathcal{F}\bm{u}-\bm{b}\|_2^2,
\end{align}
where $\alpha\in(0,1]$ is a parameter for a more general model. This model was called $\ell_1-\alpha\ell_2$-TV \cite{LZOX2015}. When $\alpha=1$, (\ref{L1alphaL2TV-Lasso}) is the $\ell_{1-2}$-TV model in
\cite{YLHX2015}. These results of \cite{YLHX2015,LZOX2015} demonstrated that 8 projections are enough to guarantee
exact recovery using $\ell_1-\alpha\ell_2$. However, this model is only fit for Gaussian noise. For impulsive noise, we consider the following model
\begin{align}\label{L1-alphaL2TV-PLAD}
\min&\lambda\bigg(\|\mathcal{D}_x \bm{u}\|_1+\|\mathcal{D}_y\bm{u}\|_1-\alpha\Big\|\sqrt{|\mathcal{D}_x\bm{u}|^2
+|\mathcal{D}_y\bm{u}|^2}\Big\|_1\bigg)\nonumber\\
&+\|\bm{R}\mathcal{F}\bm{u}-\bm{b}\|_1,
\end{align}
where $\bm{b}=\bm{R}\mathcal{F}\bm{u}+\bm{z}$ with noise $\bm{z}\in\mathbb{R}^{n_1\times n_2}$. We call it as $\ell_1-\alpha\ell_2$TV-PLAD. Here, let impulsive noise be $S\tau S$ noise.

By ADMM and DCA algorithms, we present the special algorithm to compute (\ref{L1-alphaL2TV-PLAD}).
Splitting the term $\|\bm{R}\mathcal{F}\bm{u}-\bm{b}\|_1$, and respectively  replacing $\mathcal{D}_x\bm{u}, \mathcal{D}_y\bm{u}$ by $\bm{d}_x,\bm{d}_y$, then one has an equivalent problem of (\ref{L1-alphaL2TV-PLAD}) as
follows
\begin{align}\label{L1alphaL2TV-LAD-Equivalent}
\min&~\lambda\bigg(\|\bm{d}_x\|_1+\|\bm{d}_y\|_1-\alpha\Big\|\sqrt{|\bm{d}_x|^2+|\bm{d}_y|^2}\Big\|_1\bigg)
+\|\bm{v}\|_1\nonumber\\
\text{s.~t.}&~\mathcal{D}_x\bm{u}=\bm{d}_x,~\mathcal{D}_y\bm{u}=\bm{d}_y,~\bm{R}\mathcal{F}\bm{u}-\bm{v}=\bm{b}.
\end{align}
Let
\begin{align*}
&\mathcal{L}(\bm{u},\bm{v},\bm{d}_x,\bm{d}_y;\bm{w},\bm{h}_x,\bm{h}_y)\nonumber\\
&=\lambda\bigg(\|\bm{d}_x\|_1+\|\bm{d}_y\|_1
-\alpha\Big\|\sqrt{|\bm{d}_x|^2+|\bm{d}_y|^2}\Big\|_1\bigg) +\|\bm{v}\|_1\nonumber\\
&\hspace*{12pt}+\frac{\rho_1}{2}\|\bm{R}\mathcal{F}\bm{u}-\bm{v}-\bm{b}\|_2^2-\langle \bm{w}, \bm{R}\mathcal{F}\bm{u}-\bm{v}-\bm{b}\rangle \nonumber\\
&\hspace*{12pt}+\frac{\rho_2}{2}\|\mathcal{D}_x\bm{u}-\bm{d}_x\|_2^2
-\langle \bm{h}_x, \mathcal{D}_x\bm{u}-\bm{d}_x\rangle\nonumber\\
&\hspace*{12pt}+\frac{\rho_2}{2}\|\mathcal{D}_y\bm{u}-\bm{d}_y\|_2^2-\langle \bm{h}_y, \mathcal{D}_y\bm{u}-\bm{d}_y\rangle
\end{align*}
be the augmented Lagrangian function of (\ref{L1alphaL2TV-LAD-Equivalent}) with the Lagrangian multipliers $\bm{w},\bm{h}_x,\bm{h}_y\in\mathbb{R}^{n_1\times n_2}$.
Then using ADMM iterate scheme and DCA in $\bm{d}_x,\bm{d}_y$-subproblem, we give the special algorithm.

\begin{algorithm}\label{al:LADMg}
\centering
\caption{$\ell_1-\alpha\ell_2$LA for solving $\ell_{1}-\alpha\ell_{2}$TV-PLAD-(\ref{L1-alphaL2TV-PLAD})}
\vspace{-0mm}
\begin{tabular}{@{}ll}
\textbf{Input}~~$\bm{R}$, $\bm{b}$, $0<\alpha\leq 1$,  $\lambda$,  $\rho_1,\rho_2$.\\
\textbf{Initialize}~~\\
~~~~~~$(\bm{u},\bm{v},\bm{d}_x,\bm{d}_y;\bm{w},\bm{h}_x,\bm{h}_y)
=(\bm{u}^0,\bm{v}^0,\bm{d}_x^0,\bm{d}_y^0;\bm{w}^0,\bm{h}_x^0,\bm{h}_y^0)$,\\
~~~~~~~$k=0$.\\
\textbf{While}~~some stopping criterion is not satisfied \textbf{do}\\
~~\textbf{1.}~~Compute sub-gradient $\bm{q}^{k}$ of $\|\sqrt{|\bm{d}_x|^2+|\bm{d}_y|^2}\|_1$  at point\\
 ~~~~~~$(\bm{d}_x^k,\bm{d}_y^k)$ by \\
~~~~~~$\bm{q}^{k}=(\bm{q}_x^k;\bm{q}_y^k)=\frac{(\bm{d}_x^k;\bm{d}_y^k)}
 {\sqrt{|\bm{d}_x^k|^2+|\bm{d}_y^k|^2}}$\\

~~\textbf{2.}~~
Compute $\bm{u}^{k+1}$ by\\
~~~~~~$\bm{u}^{k+1}=\big(\rho_1\bm{R}^{T}\bm{R}-\rho_2\mathcal{\triangle}\big)^{-1}
\bigg(\rho_1\mathcal{F}^{*}\bm{R}(\bm{b}+\bm{v}^k+\bm{w}^k)$\\
~~~~~~$+\rho_2 \mathcal{D}_x^{T}(\bm{d}_x^k+\bm{h}_x^k)+\rho_2 \mathcal{D}_y^{T}(\bm{d}_y^k+\bm{h}_y^k)\bigg).$\\

~~\textbf{3.}~~Compute $\bm{v}^{k+1}$ by\\
~~~~~~$\bm{v}^{k+1}=S(\bm{R}\mathcal{F}\bm{u}^{k+1}-\bm{b}-\bm{w}^k,\frac{1}{\rho_1})$.\\
~~\textbf{4.}~~Update $\bm{d}_x^{k+1},~\bm{d}_y^{k+1}$ via\\
~~~~~~$\bm{d}_x^{k+1}=S\bigg((\mathcal{D}_x\bm{u}^{k+1}
-\bm{h}_x^k)+\frac{\lambda\alpha}{\rho_2} \bm{q}_x^k,\frac{\lambda}{\rho_2}\bigg)$,\\
~~~~~~ $\bm{d}_y^{k+1}=S\bigg((\mathcal{D}_y\bm{u}^{k+1}-\bm{h}_y^k)+\frac{\lambda\alpha}{\rho_2} \bm{q}_y^k,\frac{\lambda}{\rho_2}\bigg)$.\\
~~\textbf{5.}~~Update dual variables \\
~~~~~~$
\bm{w}^{k+1}=\bm{w}^k-\rho_1(\bm{R}\mathcal{F}\bm{u}^{k+1}-\bm{v}^{k+1}-\bm{b})$\\
~~~~~~$\bm{h}_x^{k+1}=\bm{h}_x^k-\rho_2(\mathcal{D}_x\bm{u}^{k+1}-\bm{d}_x^{k+1})$\\
~~~~~~$\bm{h}_y^{k+1}=\bm{h}_y^k-\rho_2(\mathcal{D}_y\bm{u}^{k+1}-\bm{d}_y^{k+1})$.\\
~~\textbf{6.}~~$k=k+1$.\\

\textbf{~~End}
\vspace{-3.5pt} \\
\end{tabular}
\end{algorithm}

\begin{remark}
In Algorithm 2,
$\alpha$ is a model parameter  and satisfies  $0<\alpha\leq 1$, $\lambda>0$ is a
 penalty parameter, $0<\rho_1,~\rho_2<1$ are regularized parameters.
\end{remark}

In this section, numerical experiments
compare our $\ell_1-\alpha \ell_2$LA algorithm   with some other efficient methods  including YALL1 \cite{YZ2011} for penalized LAD model
\begin{align}\label{L1TV-PLAD}
\min~\lambda(\|\mathcal{D}_x \bm{u}\|_1+\|\mathcal{D}_y\bm{u}\|_1)+\|\bm{R}\mathcal{F}\bm{u}-\bm{b}\|_1
\end{align}
and LqLA-ADMM \cite{WPYYL2017}
\begin{align}\label{LqTV-PLAD}
\min~\lambda(\|\mathcal{D}_x \bm{u}\|_q^q+\|\mathcal{D}_y\bm{u}\|_q^q)
+\|\bm{R}\mathcal{F}\bm{u}-\bm{b}\|_{1,\varepsilon},
\end{align}
where $0<q<1$.

Fig. \ref{f5.1} shows the stable recovery of 8 projections using the proposed method. In Fig. \ref{f5.1}, the root-mean-square (RMS) error is used to measure the performance quantitatively. The RMS between reference and distorted images $\bm{X}$, $\bm{Y}$ is defined
as $\text{RMS}(\bm{X},\bm{Y})=\|\bm{X}-\bm{Y}\|_2/\sqrt{M}$, where $M$ is the number of pixels in images $\bm{X}$,$\bm{Y}$. Figure \ref{f5.1}  explains that $\ell_1-\alpha \ell_2~(\alpha=0.5)$ is much better than YALL1 and LqLA-ADMM~$(q=0.5)$ visually as well as in terms of RMS. Fig. \ref{f5.1} also shows that 8 projections are sufficient to have stable recovery in impulsive noise by using the $\ell_1-\alpha \ell_2$LA method.

\begin{figure*}[htbp!]
\begin{centering}
\includegraphics[width=15.0cm]{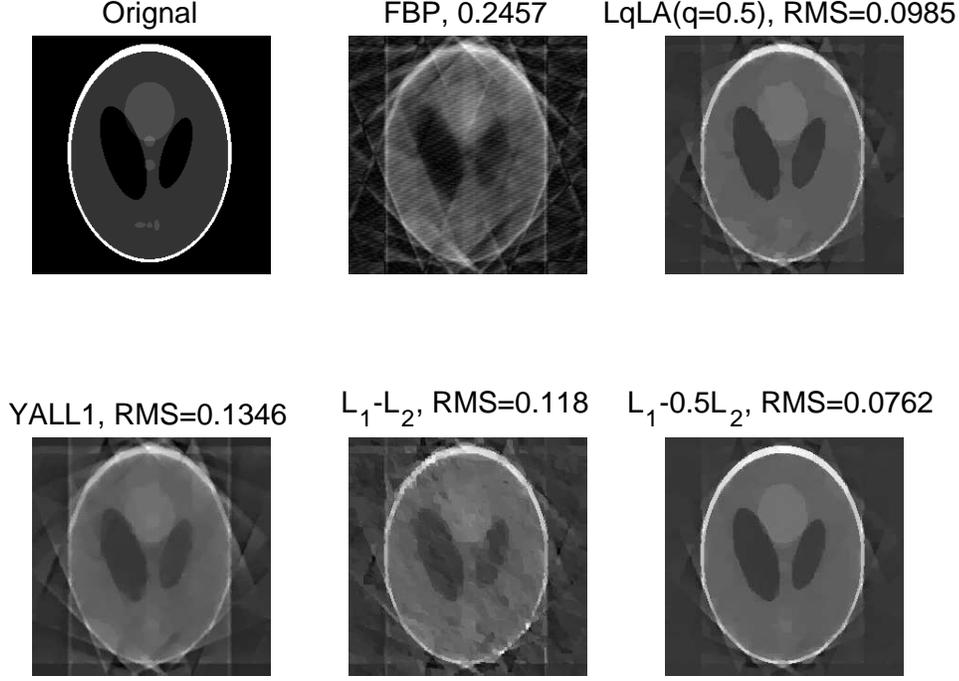}
\par\end{centering}
\caption{MRI reconstruction from observation with impulsive noise. It is demonstrated that 8 projections are enough to have stable recovery in impulsive using $L_1-\alpha L_2$LA. The root-mean-square (RMS) errors are provided for each method.}
\label{f5.1}	
\end{figure*}

\section{Conclusions \label{s6}}
\hskip\parindent

In this paper, we consider the signal and image reconstructions in impulsive noise via $\ell_{1}-\alpha\ell_{2}~(0<\alpha\leq 1)$ minimization. First, we propose the two new  models of  $\ell_{1}-\alpha\ell_{2}$-LAD (\ref{VectorL1-alphaL2-LAD}), and $\ell_{1}-\alpha\ell_{2}$-DS (\ref{VectorL1-alphaL2-DS}) in Section \ref{s1}. In Section \ref{s2},  we obtain a sufficient  condition based on $(\ell_2,\ell_1)$-RIP to guarantee the exact recovery of $\bm{x}$ from $\bm{b}=\bm{A}\bm{x}$ via (\ref{VectorL1-alphaL2-Exact}) (see Theorem \ref{ExactRecoveryviaVectorL1-alphaL2-Exact})).
And in Section \ref{s3}, we consider the recovery of $\bm{x}$ via (\ref{VectorL1-alphaL2-LAD}) and (\ref{VectorL1-alphaL2-DS})  in the noisy case. We  give the sufficient $(\ell_2,\ell_1)$-RIP conditions to guarantee the stable recovery of $\bm{x}$ from $\bm{b}=\bm{A}\bm{x}+\bm{z}$ (see Theorem \ref{StableRecoveryviaVectorL1-alphaL2-LAD} and Theorem \ref{StableRecoveryviaVectorL1-alphaL2-DS}).

In order to obtain the efficient algorithm of (\ref{VectorL1-alphaL2-LAD}), 
we  introduce the unconstrained $\ell_{1}-\alpha\ell_{2}$ model $\ell_{1}-\alpha\ell_{2}$-PLAD (\ref{VectorL1-alphaL2-PLAD}).  Using ADMM and DCA,
we have developed a numerical scheme-$\ell_{1}-\alpha\ell_{2}$LA to efficiently solve our unconstrained problem (\ref{VectorL1-alphaL2-PLAD}) in section \ref{s4}.

Last, we present numerical experiments for the  sparse signal and compressible image recovery in impulsive noise case. They  demonstrate the efficiency of $\ell_{1}-\alpha\ell_{2}$LA method (see section \ref{s5}). In signal recovery experiments,  let sensing matrix $\bm{A}$ has different coherence: small coherence $\mu(\bm{A})<0.35$, mild coherence $0.5<\mu(\bm{A})<0.65$ and high coherence $\mu(\bm{A})>0.99$. Although our method performs not well when sensing matrix has small coherence, the difference is smaller when the coherence increases. And when the measurement matrix has high coherence, our method becomes the best. And the MRI phantom image recovery test also demonstrates that $\ell_1-\alpha \ell_2$LA is highly effective and comparable to state-of-the-art methods.



\appendices

 \section{Proof of Theorem~\ref{ExactRecoveryviaVectorL1-alphaL2-Exact}}\label{pro:ExactRecoveryviaVectorL1-alphaL2-Exact}
\begin{proof}
Let $\hat{\bm{x}}$ be the minimizer of \eqref{VectorL1-alphaL2-Exact}.
Clearly,
$\bm{b}=\bm{A}\hat{\bm{x}}$ and $\|\hat{\bm{x}}\|_{\alpha,1-2}\leq \|\bm{x}\|_{\alpha,1-2}$.
Let $\bm{h}=\hat{\bm{x}}-\bm{x}$. Suppose that $\bm{h}\in\mathcal{N}(\bm{A})\backslash\{\bm{0}\}$. Then by \eqref{e:h-maxsupperboundnoiseless1} in Lemma \ref{ConeconstraintinequalityforL1-L2}, we have 
\begin{align}\label{e2.1}
\|\bm{h}_{-\max(s)}\|_1\leq\|\bm{h}_{\max(s)}\|_1+\alpha\|\bm{h}\|_2.
\end{align}
From  $\bm{b}=\bm{Ax}$ and $\bm{b}=\bm{A}\hat{\bm{x}}$, it follows that
\begin{align}\label{e2.8}
\|\bm{Ah}\|_1=\|\bm{A}\hat{\bm{x}}-\bm{Ax}\|_1
=\bm{0}.
\end{align}

Let $T_0=\text{supp}(\bm{h}_{\max(s)})$, $t=ks\in\mathbb{Z}_+$,
$T_1$ be the index set of the $t\in\mathbb{Z}_+$
 largest entries of $\bm{h}_{-\max(s)}$  and $T_{01}=T_0\cup T_1$.
 Thus, by the facts that $\bm{A}$ satisfies  the $(\ell_2,\ell_1)$-RIP condition of $(k+1)s$ order,
 $t=ks$, $\bm{x}$ is $s$-sparse  and Lemma \ref{LowerBound},
ones have  a lower bound of $\|\bm{Ah}\|_1$
\begin{align}\label{e2.7}
\|\bm{Ah}\|_1
&\geq\rho_{ks}\|\bm{h}_{T_{01}}\|_2,
\end{align}
where $\rho_{ks}=1-\delta_{(k+1)s}^{lb}-\frac{(1+\delta_{ks}^{ub})}{a(s,ks;\alpha)}$
with $a(s,ks;\alpha)=\frac{\sqrt{ks}-\alpha}{\sqrt{s}+\alpha}>1.$




Combining the lower bound (\ref{e2.7}) with 
(\ref{e2.8}),
 we have
\begin{align}\label{e2.9}
0\geq\Big(1-\delta_{(k+1)s}^{lb}-\frac{1+\delta_{ks}^{ub}}{a(s,ks;\alpha)}\Big)\|\bm{h}_{T_{01}}\|_2.
\end{align}
Note that
the condition \eqref{e2.0} implies that
$$
1-\delta_{(k+1)s}^{lb}-\frac{1+\delta_{ks}^{ub}}{a(s,ks;\alpha)}>0,
$$
i.e., $\rho_{ks}>0$.
Then by (\ref{e2.9}), it is clear that
$$
\|\bm{h}_{T_{01}}\|_2\leq0,
$$
Therefore, by the definition of  $T_{01}$,
 $\bm{h}=\bm{0}$, which contradicts  with the assumption $\bm{h}\in\mathcal{N}(\bm{A})\backslash\{\bm{0}\}$.
 We complete the proof.
\end{proof}

\section{Proof of Theorem~\ref{StableRecoveryviaVectorL1-alphaL2-LAD}}\label{pro:StableRecoveryviaVectorL1-alphaL2-LAD}
\begin{proof}
Let $\bm{h}=\hat{\bm{x}}^{\ell_{1}}-\bm{x}$.
Since $\hat{\bm{x}}^{\ell_{1}}$ is the minimizer of (\ref{VectorL1-alphaL2-LAD}),
 $\|\bm{b}-\bm{A}\hat{\bm{x}}^{\ell_1}\|_1\leq \eta_1$ and
$\|\hat{\bm{x}}^{\ell_{1}}\|_{\alpha,1-2}\leq \|\bm{x}\|_{\alpha,1-2}$.
Then, by \eqref{e:h-maxsupperbound2} in Lemma \ref{ConeconstraintinequalityforL1-L2}, 
we have
\begin{align}\label{e3.2}
\|\bm{h}_{-\max(s)}\|_1\leq\|\bm{h}_{\max(s)}\|_1+2\|\bm{x}_{-\max(s)}\|_1+\alpha\|\bm{h}\|_2.
\end{align}
By the facts that $\|\bm{z}\|_1=\|\bm{b}-\bm{Ax}\|_1\leq \eta_1$
and $\|\bm{b}-\bm{A}\hat{\bm{x}}^{\ell_1}\|_1\leq \eta_1$,
one has 
\begin{align}\label{e3.3}
\|\bm{Ah}\|_1=\|\bm{A}\hat{\bm{x}}^{\ell_1}-\bm{Ax}\|_1
\leq&\|\bm{A}\hat{\bm{x}}^{\ell_1}-\bm{b}\|_1\nonumber\\
+\|\bm{b}-\bm{Ax}\|_1\leq&\eta_1+\eta_1=2\eta_1.
\end{align}

Similar to  the proof of Theorem \ref{ExactRecoveryviaVectorL1-alphaL2-Exact},
let $T_0=\text{supp}(\bm{h}_{\max(s)})$, $t=ks\in\mathbb{Z}_+$,
$T_1$ be the index set of the $t=ks$
 largest entries of $\bm{h}_{-\max(s)}$  and $T_{01}=T_0\cup T_1$.
 Thus, by the facts that $\bm{A}$ satisfies  the $(\ell_2,\ell_1)$-RIP condition of $(k+1)s$ order,
 $t=ks$, 
 and Lemma \ref{LowerBound},
ones obtain a lower bound of $\|\bm{Ah}\|_1$

\begin{align}\label{e3.5}
\|\bm{Ah}\|_1
\geq&\rho_{ks}\|\bm{h}_{T_{01}}\|_{2}
-(1+\delta_{ks}^{ub})\frac{2\|\bm{x}_{-\max(s)}\|_1}{\sqrt{ks}-\alpha},
\end{align}
where $\rho_{ks}=1-\delta_{(k+1)s}^{lb}-\frac{(1+\delta_{ks}^{ub})}{a(s,ks;\alpha)}$
with $a(s,ks;\alpha)=\frac{\sqrt{ks}-\alpha}{\sqrt{s}+\alpha}$.

By (\ref{e3.5}) and
(\ref{e3.3}), we have
\begin{align}\label{e3.7}
2\eta_1\geq&\Big(1-\delta_{(k+1)s}^{lb}-\frac{(1+\delta_{ks}^{ub})}{a(s, ks; \alpha)}\Big)\|\bm{h}_{T_{01}}\|_{2}\nonumber\\
&-(1+\delta_{ks}^{ub})\frac{2\|\bm{x}_{-\max(s)}\|_1}{\sqrt{ks}-\alpha}.
\end{align}
where $a(s;ks,\alpha)=\frac{\sqrt{ks}-\alpha}{\sqrt{s}+\alpha}>1$.
Furthermore, the condition  \eqref{e2.0}
implies that
$$
1-\delta_{(k+1)s}^{lb}-\frac{(1+\delta_{ks}^{ub})}{a(s,ks;\alpha)}>0,
$$
that is $\rho_{ks}>0$.
Then, by  (\ref{e3.7}), one has
\begin{align}\label{e3.8}
\|\bm{h}_{T_{01}}\|_{2}\leq\frac{2}{\rho_{ks}}\eta_1
+\frac{(1+\delta_{ks}^{ub})\sqrt{s}}{(\sqrt{ks}-\alpha)\rho_{ks}}\frac{2\|\bm{x}_{-\max(s)}\|_1}{\sqrt{s}}.
\end{align}

By the fact that $\|\bm{h}\|_2=\sqrt{\|\bm{h}_{T_{01}}\|_2^2+\|\bm{h}_{T_{01}^c}\|_2^2}$,
 to show \eqref{e:stablel1upper}, we need to
 estimate the upper bound of $\|\bm{h}_{T_{01}^c}\|_{2}$.
 Without loss of generality, we assume that $|h_1|\geq\cdots\geq|h_s|\geq|h_{s+1}|\geq\cdots\geq|h_{s+t}|\geq\cdots\geq|h_n|$ with $t=ks\in \mathbb{Z}_+$.
 Then,
\begin{align}\label{e3.9}
\|\bm{h}_{T_{01}^c}\|_{2}
\leq&\sqrt{\|\bm{h}_{T_{01}^c}\|_1\|\bm{h}_{T_{01}^c}\|_{\infty}}\nonumber\\
\overset{(1)}{\leq}&\sqrt{\Big(\|\bm{h}_{T_0^c}\|_1-\sum_{j\in T_1}|h_j|\Big)|h_{s+t}|}\nonumber\\
\overset{(2)}{\leq}&\sqrt{\Big(\|\bm{h}_{T_0^c}\|_1-t|h_{s+t}|\Big)|h_{s+t}|}\nonumber\\
=&\sqrt{-t\Big(|h_{s+t}|-\frac{\|\bm{h}_{T_0^c}\|_1}{2t}\Big)^2
+\frac{\|\bm{h}_{T_0^c}\|_1^2}{4t}}\nonumber\\
&\leq\frac{\|\bm{h}_{T_0^c}\|_1}{2\sqrt{t}}\nonumber\\
\overset{(3)}{\leq}&\frac{\|\bm{h}_{\max(s)}\|_1+2\|\bm{x}_{-\max(s)}\|_1+\alpha\|\bm{h}\|_2}{2\sqrt{t}}\nonumber\\
\overset{(4)}{\leq}&\frac{1}{2}\sqrt{\frac{s}{t}}\Big(\|\bm{h}_{T_{01}}\|_2+\frac{2\|\bm{x}_{-\max(s)}\|_1
+\alpha\|\bm{h}\|_2}{\sqrt{s}}\Big)\nonumber\\
\overset{(5)}{=}&\frac{1}{2\sqrt{k}}\bigg(\|\bm{h}_{T_{01}}\|_2+\frac{2\|\bm{x}_{-\max(s)}\|_1
+\alpha\|\bm{h}\|_2}{\sqrt{s}}\bigg),
\end{align}
where (1) and (2) are from $T_{01}=T_0\cup T_1$, $|T_1|=t$ and the assumption $|h_1|\geq\cdots\geq|h_s|\geq|h_{s+1}|\geq\cdots\geq|h_{s+t}|\geq\cdots\geq|h_n|$,
(3) follows from \eqref{e3.2}, (4) is due to $\|\bm{h}_{\max(s)}\|_1\leq \sqrt{s}\|\bm{h}_{\max(s)}\|_2$,
$T_0=\mathrm{supp}(\bm{h}_{\max(s)})$ and $T_{01}=T_0\cup T_1$, and
(5) follows from $t=ks\in \mathbb{Z}_+$.

By  \eqref{e3.9}, ones have
\begin{align}\label{e:hupperbound}
&\|\bm{h}\|_2=\sqrt{\|\bm{h}_{T_{01}}\|_2^2+\|\bm{h}_{T_{01}^c}\|_2^2}\nonumber\\
&\leq\sqrt{\|\bm{h}_{T_{01}}\|_2^2+\frac{1}{4k}\bigg(\|\bm{h}_{T_{01}}\|_2+\frac{2\|\bm{x}_{-\max(s)}\|_1
+\alpha\|\bm{h}\|_2}{\sqrt{s}}\bigg)^2}\nonumber\\
&\leq\bigg(1+\frac{1}{2\sqrt{k}}\bigg)\|\bm{h}_{T_{01}}\|_2+\frac{1}{2\sqrt{k}}\frac{2\|\bm{x}_{-\max(s)}\|_1}{\sqrt{s}}
+\frac{1}{2\sqrt{k}}\frac{\alpha\|\bm{h}\|_2}{\sqrt{s}},
\end{align}
where the last inequality is due to the basic inequality $\sqrt{a^2+b^2}\leq a+b$ for $a,b\geq 0$.
Since $0\leq \alpha \leq 1$ and $ks\in \mathbb{Z}_+$, $1-\frac{\alpha}{2\sqrt{ks}}>0$.
Thus, based on \eqref{e:hupperbound}, we have
\begin{align*}
\|\bm{h}\|_2\leq\frac{(2\sqrt{k}+1)\sqrt{s}}{2\sqrt{ks}-\alpha}\|\bm{h}_{T_{01}}\|_2
+\frac{\sqrt{s}}{2\sqrt{ks}-\alpha}\frac{2\|\bm{x}_{-\max(s)}\|_1}{\sqrt{s}}.
\end{align*}
Substituting (\ref{e3.8}) into the above inequality, ones get
\begin{align*}
&\|\bm{h}\|_2
\leq\frac{(2\sqrt{k}+1)\sqrt{s}}{2\sqrt{ks}-\alpha}
\Big(\frac{2}{\rho_{ks}}\eta_1+\frac{(1+\delta_{ks}^{ub})
\sqrt{s}}{\rho_{ks}(\sqrt{ks}-\alpha)}\frac{2\|\bm{x}_{-\max(s)}\|_1}{\sqrt{s}}\Big)\\
&\ \ +\frac{\sqrt{s}}{2\sqrt{ks}-\alpha}\frac{2\|\bm{x}_{-\max(s)}\|_2}{\sqrt{s}}\\
&=\frac{\sqrt{s}}{2\sqrt{ks}-\alpha}\Big(\frac{(2\sqrt{k}+1)(1+\delta_{ks}^{ub})
\sqrt{s}}{\rho_{ks}(\sqrt{ks}-\alpha)}+1\Big)
\frac{2\|\bm{x}_{-\max(s)}\|_1}{\sqrt{s}}\\
&+\frac{2(2\sqrt{k}+1)\sqrt{s}}{(2\sqrt{2}\sqrt{s}-\alpha)\rho_{ks}}\eta_1.
\end{align*}
We complete the proof of Theorem \ref{StableRecoveryviaVectorL1-alphaL2-LAD}.
\end{proof}

\section{Proof of Theorem~\ref{StableRecoveryviaVectorL1-alphaL2-DS}}\label{pro:StableRecoveryviaVectorL1-alphaL2-DS}
\begin{proof}
Take $\bm{h}=\hat{\bm{x}}^{DS}-\bm{x}$.
Since  $\hat{\bm{x}}^{DS}$ is the minimizer  of (\ref{VectorL1-alphaL2-DS}),
which implies $\|\hat{\bm{x}}^{DS}\|_{\alpha,1-2}\leq \|\bm{x}\|_{\alpha,1-2}$
and $\|\bm{A}^{*}(\bm{b}-\bm{A}\hat{\bm{x}}^{DS})\|_\infty\leq \eta_2$,
 \eqref{e3.2} still holds.
From the facts $\|\bm{A}^{*}\bm{z}\|_\infty=\|\bm{A}^{*}(\bm{b}-\bm{Ax})\|_\infty\leq \eta_2$
and $\|\bm{A}^{*}(\bm{b}-\bm{A}\hat{\bm{x}}^{DS})\|_\infty\leq \eta_2$,
we have the following tube constraint inequality
\begin{align}\label{e3.11}
\|\bm{A}^{*}\bm{Ah}\|_{\infty}
=&\|\bm{A}^{*}(\bm{A}^{*}{\bm{x}}^{DS}-\bm{Ax)}\|_{\infty}\nonumber\\
\leq&\|\bm{A}^{*}(\bm{A}\hat{\bm{x}}^{DS}-\bm{b})\|_{\infty}+\|\bm{A}^{*}(\bm{b}-\bm{Ax})\|_{\infty}\nonumber\\
\leq&\eta_2+\eta_2=2\eta_2
\end{align}
instead of (\ref{e3.3}).

Similarly,
let $T_0=\text{supp}(\bm{h}_{\max(s)})$,
$t=ks\in\mathbb{Z}_+$,
$T_1$ be the index set of the $t\in\mathbb{Z}_+$
 largest entries of $\bm{h}_{-\max(s)}$  and $T_{01}=T_0\cup T_1$.
Since $\bm{A}$ satisfies  the $(\ell_2,\ell_1)$-RIP condition of $(k+1)s$ order,
$t=ks$, 
and Lemma \ref{LowerBound},
(\ref{e3.5}) holds, which presents a lower bound of $\|\bm{Ah}\|_1$.

Next, we  estimate the upper bound of $\|\bm{Ah}\|_1$ using new technology,
which is completely different  from that of the proof for Theorem \ref{StableRecoveryviaVectorL1-alphaL2-LAD}.


By Cauchy-Schwartz inequality,  we have
\begin{align}\label{e3.12}
\|\bm{Ah}\|_1
&\leq \sqrt{m}\|\bm{Ah}\|_2
=\sqrt{m}\langle \bm{Ah}, \bm{Ah}\rangle^{1/2}\nonumber\\
&=\sqrt{m}\langle \bm{A}^{*}\bm{Ah}, \bm{h}\rangle^{1/2}
\leq\sqrt{m}\sqrt{\|\bm{A}^{*}\bm{Ah}\|_{\infty}\|\bm{h}\|_1}\nonumber\\
&=\sqrt{m}\sqrt{\|\bm{A}^{*}\bm{Ah}\|_{\infty}(\|\bm{h}_{T_{0}}\|_1+\|\bm{h}_{T_{0}^c}\|_1)}\nonumber\\
&\overset{(1)}{\leq}\sqrt{m}\sqrt{2\eta_2(2\|\bm{h}_{T_{0}}\|_1+2\|\bm{x}_{-\max(s)}\|_1+\alpha\|\bm{h}\|_2)}\nonumber\\
&\overset{(2)}{\leq}\sqrt{2m\sqrt{s}\eta_2\bigg(2\|\bm{h}_{T_{01}}\|_2+\frac{2\|\bm{x}_{-\max(s)}\|_1
+\alpha\|\bm{h}\|_2}{\sqrt{s}}\bigg)}
\end{align}
where (1) is from \eqref{e3.11}, (2) is due to $T_{01}=T_0\cup T_1$ and
 $\|\bm{h}_{T_{0}}\|_1\leq \sqrt{s}\|\bm{h}_{T_{0}}\|_2$ with $|T_{0}|\leq s$.

Combining  (\ref{e3.5}) with  (\ref{e3.12}), we have
\begin{align}\label{e:inequalitynosimiple}
&
\rho_{ks}\|\bm{h}_{T_{01}}\|_{2}
 -\frac{(1+\delta_{ks}^{ub})\sqrt{s}}{\sqrt{ks}-\alpha}\frac{2\|\bm{x}_{-\max(s)}\|_1}{\sqrt{s}}\nonumber\\
&\leq
\sqrt{2m\sqrt{s}\eta_2\bigg(2\|\bm{h}_{T_{01}}\|_2+\frac{2\|\bm{x}_{-\max(s)}\|_1+\alpha\|\bm{h}\|_2}{\sqrt{s}}\bigg)},
\end{align}
where $\rho_{ks}=1-\delta_{t+s}^{lb}-\frac{(1+\delta_{ks}^{ub})}{a(s,ks;\alpha)}$ with
$a(s,ks;\alpha)=\frac{\sqrt{ks}-\alpha}{\sqrt{s}+\alpha}>1$.
Furthermore,
\begin{align*}
 &1-\delta_{(k+1)s}^{lb}-\frac{1+\delta_{ks}^{ub}}{a(s,ks;\alpha)}\\
 &>1-\delta_{(k+1)s}^{lb}-\frac{(1+b(s,k;\alpha))(1+\delta_{ks}^{ub})}{a(s,ks;\alpha)b(s,k;\alpha)}>0
 \end{align*}
 where the first and  last inequalities are  from  $b(s,k;\alpha)=\frac{8(2\sqrt{ks}-\alpha)}{17\alpha(2\sqrt{k}+1)}>0$
 with $0<\alpha\leq 1$ and  $\eqref{RIPCondition3}$, respectively.

To estimate $\|\bm{h}_{T_{01}}\|_{2}$ from \eqref{e:inequalitynosimiple},
 we consider the following two cases.

First, if
$$
\rho_{ks}\|\bm{h}_{T_{01}}\|_{2}
 -\frac{(1+\delta_{ks}^{ub})\sqrt{s}}{\sqrt{ks}-\alpha}\frac{2\|\bm{x}_{-\max(s)}\|_1}{\sqrt{s}}<0,$$
 i.e.,
 $$\|\bm{h}_{T_{01}}\|_{2}<
\frac{(1+\delta_{ks}^{ub})\sqrt{s}}{(\sqrt{ks}-\alpha)\rho_{ks}}\frac{2\|\bm{x}_{-\max(s)}\|_1}{\sqrt{s}}.
 $$


Second, if
\begin{align*}
\Big(1-&\delta_{t+s}^{lb}-\frac{1+\delta_{ks}^{ub}}{a(s;ks,\alpha)}\Big)\|\bm{h}_{T_{01}}\|_{2}\\
&\hspace*{12pt}-\frac{(1+\delta_{ks}^{ub})\sqrt{s}}{\sqrt{ks}-\alpha}\frac{2\|\bm{x}_{-\max(s)}\|_1}{\sqrt{s}}
\geq0,
\end{align*}
 which implies
 $$ \|\bm{h}_{T_{01}}\|_{2}\geq\frac{(1+\delta_{ks}^{ub})\sqrt{s}}{(\sqrt{ks}-\alpha)\rho_{ks}}
 \frac{2\|x_{-\max(s)}\|_1}{\sqrt{s}},
 $$
then the inequality \eqref{e:inequalitynosimiple} is equivalent  to
\begin{align}\label{e3.13}
&\bigg(\rho_{ks}\|\bm{h}_{T_{01}}\|_{2} -\frac{(1+\delta_{ks}^{ub})\sqrt{s}}{\sqrt{ks}-\alpha}\frac{2\|\bm{x}_{-\max(s)}\|_1}{\sqrt{s}}\bigg)^2\nonumber\\
&\leq2m\sqrt{s}\eta_2\bigg(2\|\bm{h}_{T_{01}}\|_2+\frac{2\|\bm{x}_{-\max(s)}\|_1+\alpha\|\bm{h}\|_2}{\sqrt{s}}\bigg).
\end{align}

Let $X=\|\bm{h}_{T_{01}}\|_{2}$ and $Y=\frac{2\|\bm{x}_{-\max(s)}\|_1+\alpha\|h\|_2}{\sqrt{s}}$. By
$\frac{2\|\bm{x}_{-\max(s)}\|_1}{\sqrt{s}}\leq Y$,
to guarantee  that \eqref{e3.13} holds, it suffices to show
\begin{align}\label{e3.14}
&\rho_{ks}^2X^2-\bigg(\frac{2\rho_1(1+\delta_{ks}^{ub})\sqrt{s}}{\sqrt{ks}-\alpha}Y+4m\sqrt{s}\eta_2\bigg)X\nonumber\\
&-2m\sqrt{s}\eta_2 Y\leq0.
\end{align}
For the one-variable quadratic inequality $aZ^2-bZ-c\leq 0$ with the  constants  $a,b,c>0$,
 there is the fact that
$$
Z\leq\frac{b+\sqrt{b^2+4ac}}{2a}\leq\frac{b}{a}+\sqrt{\frac{c}{a}}.
$$
Hence,
\begin{align}\label{e3.15}
X\leq& \frac{2\rho_{ks}\frac{(1+\delta_{ks}^{ub})\sqrt{s}}{\sqrt{ks}-\alpha}Y+4m\sqrt{s}\eta_2}{\rho_{ks}^2}
+\sqrt{\frac{2m\sqrt{s}\eta_2 \varepsilon Y}{\rho_{ks}^2\varepsilon }}\nonumber\\
\leq&\frac{2(1+\delta_{ks}^{ub})\sqrt{s}}{(\sqrt{ks}-\alpha)\rho_{ks}}Y+\frac{4m\sqrt{s}}{\rho_{ks}^2}\eta_2
+\frac{1}{2}\bigg(\frac{2m\sqrt{s}}{\rho_{ks}^2\varepsilon }\eta_2+\varepsilon Y\bigg)\nonumber\\
=&\bigg(\frac{2(1+\delta_{ks}^{ub})\sqrt{s}}{(\sqrt{ks}-\alpha)\rho_{ks}}+\frac{\varepsilon}{2}\bigg)Y
+\bigg(4+\frac{1}{\varepsilon}\bigg)\frac{m\sqrt{s}}{\rho_{ks}^2}\eta_2,
\end{align}
where $\varepsilon > 0$ is to be determined later.

 By the above discussion and
 \begin{align*}
 &\bigg(\frac{2(1+\delta_{ks}^{ub})\sqrt{s}}{(\sqrt{ks}-\alpha)\rho_{ks}}+\frac{\varepsilon}{2}\bigg)Y
+\bigg(4+\frac{1}{\varepsilon}\bigg)\frac{m\sqrt{s}}{\rho_{ks}^2}\eta_2\\
&=\bigg(\frac{2(1+\delta_{ks}^{ub})\sqrt{s}}{(\sqrt{ks}-\alpha)\rho_1}+\frac{\varepsilon}{2}\bigg)
\frac{2\|\bm{x}_{-\max(s)}\|_1+\alpha\|\bm{h}\|_2}{\sqrt{s}}\\
 &\hspace*{12pt} +\bigg(4+\frac{1}{\varepsilon}\bigg)\frac{m\sqrt{s}}{\rho_1^2}\eta_2\\
 &\geq
 \frac{(1+\delta_{ks}^{ub})\sqrt{s}}{(\sqrt{ks}-\alpha)\rho_{ks}}
\frac{2\|\bm{x}_{-\max(s)}\|_1}{\sqrt{s}},
\end{align*}
the inequality \eqref{e3.13} always holds
when
\begin{align}\label{e3.16}
\|\bm{h}_{T_{01}}\|_{2}
\leq&\bigg(\frac{2(1+\delta_{ks}^{ub})\sqrt{s}}{(\sqrt{ks}-\alpha)\rho_{ks}}+\frac{\varepsilon}{2}\bigg)
\frac{2\|\bm{x}_{-\max(s)}\|_1+\alpha\|\bm{h}\|_2}{\sqrt{s}}\nonumber\\
&+\bigg(4+\frac{1}{\varepsilon}\bigg)\frac{m\sqrt{s}}{\rho_{ks}^2}\eta_2,
\end{align}
which presents an upper bound $\|\bm{h}_{T_{01}}\|_{2}$.

Next, we will estimate $\|\bm{h}_{T_{01}^c}\|_{2}$.
In terms of the derivations of (\ref{e3.9}) and
\eqref{e:hupperbound}, they  still hold.

Substituting (\ref{e3.16}) into (\ref{e:hupperbound}), ones obtain
\begin{align*}
&\|\bm{h}\|_2
\leq\bigg(1+\frac{1}{2\sqrt{k}}\bigg)
\Bigg(
\bigg(4+\frac{1}{\varepsilon}\bigg)\frac{m\sqrt{s}}{\rho_{ks}^2}\eta_2\\
&\hspace*{12pt}+\bigg(\frac{2(1+\delta_{ks}^{ub})\sqrt{s}}{(\sqrt{ks}-\alpha)\rho_{ks}}+\frac{\varepsilon}{2}\bigg)
\frac{2\|\bm{x}_{-\max(s)}\|_1+\alpha\|\bm{h}\|_2}{\sqrt{s}}\Bigg)\\
&\hspace*{12pt}+\frac{1}{2\sqrt{k}}\frac{2\|\bm{x}_{-\max(s)}\|_1}{\sqrt{s}}
+\frac{\alpha}{2\sqrt{k}}\frac{\|\bm{h}\|_2}{\sqrt{s}}\\
&\leq\frac{1}{2\sqrt{k}}\Bigg((2\sqrt{k}+1)\bigg(\frac{2(\sqrt{s}+\alpha)(1+\delta_{ks}^{ub})}
{(\sqrt{ks}-\alpha)\rho_{ks}}
+\frac{\varepsilon}{2}\bigg)+1\Bigg)\\
&\hspace*{24pt}\times\frac{2\|\bm{x}_{-\max(s)}\|_1}{\sqrt{s}}\\
&\hspace*{12pt}+\frac{1}{2\sqrt{k}}
\Bigg((2\sqrt{k}+1)\bigg(\frac{2(\sqrt{s}+\alpha)(1+\delta_{ks}^{ub})}{(\sqrt{ks}-1)\rho_{ks}}
+\frac{\varepsilon}{2}\bigg)+1\Bigg)\\
&\hspace*{24pt}\times\frac{\alpha\|\bm{h}\|_2}{\sqrt{s}}\\
&\hspace*{12pt}+\bigg(1+\frac{1}{2\sqrt{k}}\bigg)\bigg(4+\frac{1}{\varepsilon}\bigg)\frac{m\sqrt{s}}{\rho_{ks}^2}\eta_2\\
&=\varrho\frac{2\|\bm{x}_{-\max(s)}\|_1}{\sqrt{s}}+\frac{\alpha\varrho}{\sqrt{s}}\|\bm{h}\|_2\\
&\hspace*{12pt}+\bigg(4+\frac{1}{\varepsilon}\bigg)\frac{(2\sqrt{k}+1)m\sqrt{s}}{2\sqrt{k}\rho_1^2}\eta_2,
\end{align*}
where the last equality is from
$$
\varrho=\frac{1}{2\sqrt{k}}\Bigg((2\sqrt{k}+1)\bigg(\frac{2(1+\delta_{ks}^{ub})}{a(s,ks;\alpha)\rho_{ks}}
+\frac{\varepsilon}{2}\bigg)+1\Bigg).
$$

Taking
$$
\varepsilon=\frac{2(1+\delta_{ks}^{ub})}{8a(s,ks;\alpha)\rho_{ks}},
$$
then
\begin{align}
\varrho=\frac{1}{2\sqrt{k}}\Bigg((2\sqrt{k}+1)\frac{17(1+\delta_{ks}^{ub})}{16a(s,ks;\alpha)\rho_{ks}}+1\Bigg)
<\frac{\sqrt{s}}{\alpha},
\end{align}
where the last inequality is from \eqref{RIPCondition3}.
In fact,
\begin{align*}
&\varrho-\frac{\sqrt{s}}{\alpha}
=\frac{1}{2\sqrt{k}}\Bigg((2\sqrt{k}+1)\frac{3(1+\delta_{ks}^{ub})}{a(s,ks;\alpha)\rho_{ks}}+1\Bigg)
-\frac{\sqrt{s}}{\alpha}\\
&=\frac{17(2\sqrt{k}+1)}{16\sqrt{k}a(s,ks;\alpha)\rho_{ks}}\\
&\hspace*{12pt}\Bigg(1+\delta_{ks}^{ub}-
\bigg(1-\frac{2\sqrt{ks}}{\alpha}\bigg)\frac{8}{17(2\sqrt{k}+1)}a(s,ks;\alpha)\rho_{ks}\Bigg)\\
&=:\frac{17(2\sqrt{k}+1)}{16\sqrt{k}a(s,ks;\alpha)\rho_{ks}}
\Bigg(1+\delta_{ks}^{ub}-a(s,ks;\alpha)b(s,k;\alpha)\rho_{ks}\Bigg),
\end{align*}
where
$$
b(s,k;\alpha)=\bigg(1-\frac{2\sqrt{ks}}{\alpha}\bigg)\frac{8}{17(2\sqrt{k}+1)}
=\frac{8(2\sqrt{ks}-\alpha)}{17\alpha(2\sqrt{k}+1)}.
$$
Then,
\begin{align*}
\varrho-\frac{\sqrt{s}}{\alpha}&=\frac{17(2\sqrt{k}+1)}{16\sqrt{k}a(s,ks;\alpha)\rho_{ks}}\\
&\hspace*{12pt}\times\bigg((b(s,k;\alpha)+1)\delta_{ks}^{ub}+a(s,ks;\alpha)b(s,k;\alpha)\delta_{(k+1)s}^{ub}\\
 & \ \ \ \ -\Big(a(s,ks;\alpha)b(s,k;\alpha)-b(s,k;\alpha)-1\Big)\bigg)\\
&<0
\end{align*}
where the equality is from  the definition of $\rho_{ks}$.
Therefore
\begin{align*}
\|\bm{h}\|_2&\leq\frac{\sqrt{s}\varrho}{\sqrt{s}-\alpha\varrho}\frac{2\|\textbf{x}_{-\max(s)}\|_1}{\sqrt{s}}\\
&\hspace*{12pt}+\frac{2(2\sqrt{k}+1)\big((1+\delta_{ks}^{ub})+a(s,ks;\alpha)\rho_{ks}\big)ms}
{\sqrt{k}(\sqrt{s}-\alpha\varrho)(1+\delta_{ks}^{up})\rho_{ks}^2}\eta_2,
\end{align*}
which finishes the proof of Theorem \ref{StableRecoveryviaVectorL1-alphaL2-DS}.
\end{proof}

\hskip\parindent

\textbf{Acknowledgement}: Peng Li would like to thank Dr. Jiaxin Xie (Academy of Mathematics and Systems Science, CAS) and Meng Huang (Academy of Mathematics and Systems Science, CAS) for some discussions and suggestions about the $\ell_{1-2}$ minimization.  Peng Li also thanks Jingjing Liu (Graduate School, China Academy of Engineering Physics) for her discussion about image process. Wengu Chen is supported by Natural Science Foundation of China (No. 11871109) and NSAF (Grant No.U1830107).

\end{document}